\documentclass{amsart}

\usepackage{amsmath}
\usepackage{amssymb}
\usepackage{graphicx}
\usepackage{mathrsfs}
\usepackage{tikz-cd}
\usepackage{xcolor}
\usepackage{tikz}
\usepackage[utf8]{inputenc}
\usepackage[english]{babel}
\usepackage{amsthm}
\usepackage{comment}
\usepackage{enumerate}
\usepackage{enumitem}

\newcommand{\Rep}{\mathrm{Rep}}
\newtheorem{theorem}{Theorem}[section]
\newtheorem{proposition}[theorem]{Proposition}

\newtheorem{corollary}[theorem]{Corollary}
\newtheorem{lemma}[theorem]{Lemma}

\theoremstyle{definition}
\newtheorem{remark}[theorem]{Remark}
\newtheorem{definition}[theorem]{Definition}
\newtheorem{example}[theorem]{Example}

\renewcommand{\thetheorem}{\Alph{theorem}}  

\usetikzlibrary{%
  matrix,%
  calc,%
  arrows%
}

\graphicspath{ {Images/} }

\newcommand{\C}{\mathbb{C}}
\newcommand{\R}{\mathbb{R}}
\newcommand{\Z}{\mathbb{Z}}
\newcommand{\Q}{\mathbb{Q}}
\newcommand{\N}{\mathbb{N}}

\newcommand{\TT}{\mathbb{T}}
\newcommand{\ZZ}{\mathcal{Z}}

\DeclareMathOperator{\id}{id}

\DeclareMathOperator{\im}{Im}

\DeclareMathOperator{\Ext}{Ext}

\DeclareMathOperator{\Hom}{Hom}

\DeclareMathOperator{\diag}{diag}

\DeclareMathOperator{\Prim}{Prim}
\DeclareMathOperator{\ind}{ind}
\DeclareMathOperator{\Span}{span}
\DeclareMathOperator{\SU}{SU}
\DeclareMathOperator{\U}{U}
\DeclareMathOperator{\PU}{PU}
\DeclareMathOperator{\nuc}{nuc}
\DeclareMathOperator{\SO}{SO}
\newcommand{\mt}{\mathtt}
\newcommand{\mc}{\mathcal}
\newcommand{\dimnuc}{\dim_{\nuc}}

\usepackage{esint}
\newcommand{\Irr}{\mathrm{Irr}}

\usepackage[backref=page, colorlinks]{hyperref}

\title[Nuclear Dimension of Twisted Algebras of
Virtually Abelian Groups]{Nuclear Dimension of Twisted $C^*$-Algebras of
Virtually Abelian Groups}

\author{Forrest Glebe}
\address{FG: Department of Mathematics
University of Hawaii at Manoa
2565 McCarthy Mall (Keller Hall)
Honolulu, HI 96822, USA}
\email[]{forrest6@hawaii.edu\hspace{10mm}Website: https://sites.google.com/view/forrestglebe/home}

\author{Pradyut Karmakar}
\address{PK: Department of Mathematics and Statistics, Sam Houston State University, Huntsville TX 77341, USA}
\email[]{karmakar.pradyut@gmail.com\hspace{10mm}
Website:https://sites.google.com/view/pradyut-karmakar}

\author{Iason Moutzouris}
\address{IM: Department of Mathematics and Statistics, Sam Houston State University, Huntsville TX 77341, USA}
\email[]{ixm089@shsu.edu \hspace{10mm} Website:https://sites.google.com/view/iasonmoutzouris/
}

\begin{document}
\begin{abstract}
    Let $G$ be a finitely generated virtually abelian group and $[\sigma]\in H^2(G;\TT)$ such that $\sigma(x,y)$ is always a root of unity. We show that the nuclear dimension of the twisted group $C^{\ast}$-algebra $C^*(G,\sigma)$ is equal to the rank of a finite index abelian subgroup of $G$. We also show that $\dimnuc(C^*(\Z^r,\sigma))=r$ if and only if $\sigma$ is type I.

\end{abstract}
\maketitle
\begingroup
\renewcommand\thefootnote{}
\footnotetext{ 
Key words and phrases: twisted group $C^*$-algebras, nuclear dimension, virtually abelian, 2-cohomology, projective representations.}

2020 Mathematics Subject Classification: 46L05, 22D25, 20C25. \par
\endgroup
\section{Introduction}
The notion of the nuclear dimension was introduced by Winter and Zacharias in \cite{WZ10} as a noncommutative analogue of the topological covering dimension, and has since played a central role in the Elliott classification program for simple $C^*$-algebras. Indeed, in the case of simple, separable and nuclear $C^*$-algebras, the nuclear dimension can be $0$ (if the $C^*$-algebra is an AF-algebra), $1$ (if it absorbs the Jiang-Su algebra $\ZZ$ tensorially, but is not AF) or $+\infty$ (otherwise) \cite{dim_nuc_simple,dim_nuc_simple_st_proj}. \par
In this article, we focus to the case of twisted group $C^*$-algebras. Eckhardt and Wu showed in \cite{EcWu2026} that twisted group $C^*$-algebras of virtually polycyclic groups have finite nuclear dimension. However, finding the exact value of the nuclear dimension of these $C^*$-algebras has been an extremely challenging problem. Regarding the untwisted case, it was recently shown in \cite{paper_untwisted_case} that the nuclear dimension of the group $C^{\ast}$-algebra of a finitely generated virtually abelian group is exactly equal to the Hirsch length of the ambient group. Recall that, for such a group, its Hirsch length is equal to the rank of any abelian subgroup of finite index. For the definition of the Hirsch length in a larger class of groups, we refer the reader to \cite{hillman_finite_Hirsch_length_old}. The main purpose of this paper is to generalize the above result for a certain type of twisted group $C^*$-algebras.

\begin{theorem} (Theorem \ref{main theorem})
Let $G$ be a finitely generated, virtually abelian discrete group, and $[\sigma]\in H^2(G;\TT)$ such that $\sigma(x,y)$ is always a root of unity. Then, $$\dimnuc(C^*(G,\sigma))=h(G)$$
where $h(G)$ is the rank of a finite index abelian subgroup of $G$.

\end{theorem}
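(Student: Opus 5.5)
Since $\sigma$ takes values in roots of unity and $G$ is finitely generated, I will first reduce to a finite-order twist. The plan is to realize $C^*(G,\sigma)$ as a corner, or a direct summand, of the untwisted group $C^*$-algebra of a central extension, and then to compute the nuclear dimension by working over the spectrum of the abelian subgroup.

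\textbf{Step 1: the central extension.} Finite generation lets me choose a cocycle in $[\sigma]$ with values in a finite cyclic group $\mu_n\subset\TT$. I expect that a root-of-unity valued cocycle on a finitely generated group is cohomologous to one taking finitely many values, though this may need an argument. Let $1\to\mu_n\to E\to G\to1$ be the associated central extension. Then $E$ is again finitely generated and virtually abelian with $h(E)=h(G)$, and
\[ C^*(E)\cong\bigoplus_{\chi\in\widehat{\mu_n}} C^*(G,\sigma^{\chi}), \]
where $C^*(G,\sigma)$ appears as the summand for the identity character. Nuclear dimension of a finite direct sum is the maximum over the summands, so the upper bound $\dimnuc C^*(G,\sigma)\le \dimnuc C^*(E)=h(E)=h(G)$ follows from the untwisted theorem of \cite{paper_untwisted_case}.

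\textbf{Step 2: the lower bound.} I cannot just take the maximum, since another summand might be the one that realizes the dimension. So I must show directly that $\dimnuc C^*(G,\sigma)\ge r$, where $r=h(G)$.

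First choose a normal free abelian subgroup $A\cong\Z^r$ of finite index. Passing to a further finite-index subgroup of the form $mA$, I can arrange that $\sigma|_{A}$ is a coboundary. Indeed, for a cocycle of order $n$ on $\Z^r$, the bicharacter $\sigma(x,y)\overline{\sigma(y,x)}$ is trivial on $nA$, so restricting to $nA$ kills the twist. The group $nA$ is still normal, since it is characteristic in $A$.

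With $\sigma|_A$ trivial, $C^*(A,\sigma|_A)\cong C(\TT^r)$ sits inside $C^*(G,\sigma)$. The algebra $C^*(G,\sigma)$ is then a twisted crossed product of $C(\TT^r)$ by the finite group $G/A$, and hence a subhomogeneous algebra whose primitive ideal space is $\TT^r$ modulo a finite group action, fibred over orbits. Among the irreducible representations of maximal dimension, the points with trivial stabilizer form a dense open set $U\subseteq\TT^r$, and $U$ is nonempty because the action is given by finitely many affine automorphisms of the torus. Over the image of $U$ the algebra is locally $C_0(V)\otimes M_k$ with $V\subset\R^r$ open. Nuclear dimension does not increase when passing to ideals, and an open subset of $\R^r$ has covering dimension $r$, so $\dimnuc\ge r$.

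\textbf{Main obstacle.} The delicate part is the generic-fibre analysis in the twisted setting. I need to know that the irreducible representations of $C^*(G,\sigma)$ are induced from the characters $\chi\in\widehat A$ twisted by projective representations of the stabilizers, and that on a nonempty open set these stabilizers are trivial, or at least constant, so that the quotient map is a covering and the ideal really is locally trivial with fibre $M_k$. I would handle this with Mackey–Clifford theory for twisted crossed products (Packer–Raeburn). Then, if the stabilizers are not trivial, I would use that the fixed-point sets of nontrivial elements of $G/A$ acting on $\TT^r$ are proper closed subtori, unless the element acts trivially. Elements acting trivially contribute a constant finite twisted group algebra factor, which only changes $k$.
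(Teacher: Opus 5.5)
Your proposal is correct in outline, but it follows a different route from the paper for both bounds.

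\textbf{Upper bound.} The paper simply quotes \cite[Prop.~2.14]{BL24}. You instead split $C^*(E)\cong\bigoplus_{\chi}C^*(G,\chi\circ\sigma)$ using the central projections of $\mu_n$. This makes $C^*(G,\sigma)$ a direct summand of $C^*(E)$, so the untwisted theorem gives $\dimnuc C^*(G,\sigma)\le h(E)=h(G)$. That is equally valid and avoids asymptotic dimension.

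\textbf{Lower bound.} The paper never works inside $C^*(G,\sigma)$. It identifies $\Prim_k(C^*(G,\sigma))$ with $A^{\widetilde G}_{k,\omega}\subseteq\Prim_k(C^*(\widetilde G))$. It then builds the explicit continuous family $\chi\mapsto\ind_L^{\widetilde G}(\varphi_\chi\circ\iota)$ from three ingredients: the embedding $L\hookrightarrow\Z^r\times F$, a $d$-th root of $\chi$, and a fixed irreducible $\pi$ of $F$ with $\pi(q(a))=\omega$. After proving local injectivity, it still needs Hausdorffness and total normality of $\Prim_k(C^*(\widetilde G))$ from the untwisted paper, Pears' subset theorem, and Winter's theorem.

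Your Packer--Raeburn picture replaces all of that. You write $C^*(G,\sigma)\cong C(\TT^r)\rtimes_{\alpha,u}G/A$ and take an ideal over a small free-orbit neighbourhood that is locally $C_0(V)\otimes(\text{finite-dimensional})$. Then you only need that nuclear dimension does not increase on ideals and that $\dimnuc C_0(V)=r$. Your route also absorbs the obstruction in the paper's $\Z\times\mathbb{H}_3(\Z_2)$ example without effort. The fibre over $\chi$ is a twisted group algebra $C^*(K/A,u(\chi))$ of a finite group, which is never zero, so you never need characters to extend. The price is reliance on Mackey/Green imprimitivity for twisted crossed products, which the paper's hands-on construction avoids.

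Three points need tightening.

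First, finite generation alone does not give a $\mu_n$-valued representative; the paper's $\Z\wr\Z$ example is a counterexample. You need $H_2(G;\Z)$ finitely generated, which holds because finitely generated virtually abelian groups are finitely presented (Lemma~\ref{rational implies n-th root}). Alternatively, for the lower bound you can work directly on $A$, where the antisymmetrized bicharacter is determined by finitely many root-of-unity values.

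Second, the generic stabilizer is not trivial in general. It is the kernel $K\supseteq A$ of the (in general affine) action of $G$ on $\widehat A$. The set where the stabilizer equals $K$ is open and dense, because each $\mathrm{Fix}(g)$ with $g\notin K$ is empty or a coset of a proper closed subgroup, hence nowhere dense.

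Third, the claimed constancy of the fibre factor needs an argument.
\begin{enumerate}
\item The map $\chi\mapsto[u(\chi)]\in H^2(K/A;\TT)$ is continuous into a finite discrete group, since coboundaries form an open subgroup of the compact group of cocycles.
\item It is therefore constant on the connected space $\TT^r$.
\item The coboundary map $\TT^{K/A}\to B^2(K/A;\TT)$ is a covering with finite fibres, so $u$ can be trivialized continuously on small sets.
\end{enumerate}
This is exactly what makes your ideal locally of the form $C_0(V)\otimes C^*(K/A,\tau)$.
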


A cohomology class as in Theorem A is called \emph{rational}. Otherwise, it is called \emph{irrational} (see also Definition \ref{rational cocycle definition}). In Section 3, we will show that for finitely generated, virtually abelian groups, rational cohomology classes are exactly the ones that are Type I.\footnote{We refer the reader to \cite{kleppner_type_I_cocycles} for information about Type I cohomology classes.} \par
Hence, our result gives an affirmative answer to \cite[Question 2.14]{BL24} in the case that the group is finitely generated. \par
The first step in our proof is to show that these $C^*$-algebras are subhomogeneous, which allows us to compute their nuclear dimension by using Winter's result in \cite{Win04}. Moreover, we observe that their irreducible representations correspond to a certain subset of the irreducible representations of a certain finitely generated, virtually abelian group that arises as a central extension of $\Z_n$ by the group (see Remark \ref{correspondence}). This allows us to utilize the Mackey Machine (\cite[Thm. 4.28]{KanTay13}), as well as ideas from the proof of the untwisted case. \par
On the other hand, observe that the irrational rotation algebras arise as twisted group $C^*$-algebras of $\Z^r$ with irrational cohomology classes (Example \ref{rotation algebras type of cocycle}). But these $C^*$-algebras have nuclear dimension 1 (\cite[Example 6.1]{WZ10}), so we cannot drop the rational cohomology class assumption from Theorem A. We actually suspect that the nuclear dimension of the twisted group $C^*$-algebras is equal to the Hirsch length of the group G, \emph{only} when the cohomology class is rational. We have managed to verify this on the case when $G=\Z^r$.

\begin{theorem} (Theorem \ref{irrational case on Z^r}, Corollary \ref{equivalene of rational and dimnuc=rank})
    $\dimnuc C^{\ast}(\Z^r, \sigma)=r$ if and only if $[\sigma]$ is rational.
\end{theorem}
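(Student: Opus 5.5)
The forward direction is Theorem A applied to the abelian group $G=\Z^r$, where $h(\Z^r)=r$. So I assume $[\sigma]$ is rational and deduce $\dimnuc C^*(\Z^r,\sigma)=r$. For the converse I argue by contrapositive: if $[\sigma]$ is irrational, I will show $\dimnuc C^*(\Z^r,\sigma)<r$.

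\textbf{Normal form for the cocycle.} Every class in $H^2(\Z^r;\TT)$ is represented by a bicharacter $\sigma(x,y)=e^{2\pi i\langle \Theta x,y\rangle}$, with $\Theta$ a real skew-symmetric matrix. By a change of basis in $GL_r(\Z)$, I would put $\Theta$ into a block form in which the rational and irrational parts separate as much as possible. The intended model is $C^*(\Z^r,\sigma)\cong C^*(\Z^{k},\sigma_1)\otimes A$, with $A$ a noncommutative torus containing an irrational rotation. In general this splitting fails, since rational relations among the entries of $\Theta$ can couple the blocks. So I would instead use the standard description of $C^*(\Z^r,\sigma)$ as a continuous field, or as a $C(\mathrm{Prim})$-algebra. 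The symmetrizer subgroup $S_\sigma=\{x:\sigma(x,\cdot)\sigma(\cdot,x)^{-1}\equiv 1\}$ gives central unitaries, so $C^*(S_\sigma)\cong C(\TT^{s})$ sits in the centre, with $s=\operatorname{rank}S_\sigma$. Irrationality of $[\sigma]$ means $\Z^r/S_\sigma$ is not finite, so $s<r$. Modulo $S_\sigma$, the cocycle becomes nondegenerate on a lattice of rank $r-s\ge 2$. The fibres over $\TT^s$ are then simple noncommutative tori of dimension $r-s$, each with nuclear dimension at most $1$.

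\textbf{Dimension count.} For the bound, I would use the fact that $C^*(\Z^r,\sigma)$ is a $C(\TT^s)$-algebra with fibres of nuclear dimension $\le 1$. Ideally it is locally trivial, or at least has $\mathcal Z$-stable fibres. The estimate I want is $\dimnuc \le (s+1)(1+1)-1$. That is the wrong shape, since it gives about $2s+1$, which is not less than $r$ when $r=s+2$. The sharper tool I would aim for is the estimate for $C(X)$-algebras with $\mathcal Z$-stable fibres, or a Rokhlin-dimension argument, giving $\dimnuc\le \dim X+1=s+1$. Since $r-s\ge 2$, this yields $s+1\le r-1<r$, which is what is needed.

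Alternatively, I would prove the bound by induction on $r$, writing $C^*(\Z^r,\sigma)=C^*(\Z^{r-1},\sigma')\rtimes\Z$. One would take the $\Z$-direction to be one that pairs irrationally, and use the bounds for crossed products with finite Rokhlin dimension. Both routes aim at the same estimate, $\dimnuc\le s+1\le r-1<r$.

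\textbf{Main obstacle.} I expect this step to be the hardest: obtaining the sharp estimate $\dimnuc\le s+1$ for the $C(\TT^s)$-algebra with simple noncommutative-torus fibres. It requires $\mathcal Z$-stability, or a suitable Rokhlin-type structure, uniformly over the base, and it requires handling the possible non-triviality of the field. I would first try to show that the bundle is trivialised by passing to a finite-index subgroup, and then use the tensor estimate $\dimnuc(C(\TT^s)\otimes B)\le s+1$ for $\mathcal Z$-stable $B$. Finite-index passage is harmless here: $C^*(\Z^r,\sigma)$ is a crossed product of the subgroup algebra by a finite abelian group, and dual-action arguments preserve the bound up to Morita equivalence.
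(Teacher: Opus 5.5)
\textbf{There is a genuine gap in the irrational direction.} Your handling of the rational direction (Theorem A with $h(\Z^r)=r$) is fine, and so is your observation that irrationality means $s=\operatorname{rank}S_\sigma<r$ with $r-s\ge 2$. But the estimate everything hinges on, $\dimnuc C^*(\Z^r,\sigma)\le s+1$, is never established, and none of the proposed tools delivers it.

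\emph{The $C(X)$-algebra estimate.} A bound $\dimnuc\le\dim X+1$ for $C(X)$-algebras (or for $C(X)\otimes B$) with $\mathcal Z$-stable fibres of nuclear dimension $\le 1$ is not an available result. You give neither a proof nor a reference. The standard tensor bounds give only $2\dim X+1$, or $5$ via Tikuisis--Winter, which is why the paper's bound has the form $\min\{5,k+1\}$.

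\emph{The Rokhlin-dimension route.} The route through $C^*(\Z^{r-1},\sigma')\rtimes\Z$ gives bounds of the form $2(\dimnuc A+1)(\dim_{\mathrm{Rok}}+1)-1$. These are multiplicative and cannot yield something below $r$ in general.

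\emph{Finite-index passage.} The claim that finite-index passage is harmless is unjustified. $C^*(\Z^r,\sigma)$ is a twisted crossed product of $C^*(H,\sigma|_H)$ by the finite group $\Z^r/H$. Nuclear dimension bounds do not pass through finite-group crossed products without Rokhlin-type hypotheses, and Takai duality only moves the finite crossed product to the other side.

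\emph{Where this bites.} It matters exactly when $\Z^r/S_\sigma$ has torsion, which your phrase ``a lattice of rank $r-s$'' glosses over. Take $r=3$ with $\beta(e_2,e_1)=e^{2\pi i\theta}$ ($\theta\notin\Q$), $\beta(e_3,e_1)=-1$, $\beta(e_3,e_2)=1$. Then $S_\sigma=2\Z e_3$ and $\Z^3/S_\sigma\cong\Z^2\oplus\Z_2$. The fibres are $A_\theta\rtimes\Z_2$, not noncommutative tori. The obvious trivialization over $\widehat{S_\sigma}\cong\TT$ would need a continuous square root of the value $\lambda$ of the central unitary $u_3^2$, and no such square root exists.

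\textbf{How the paper avoids this.}
\begin{enumerate}[label=(\arabic*)]
\item The Elliott--Li normal form (Proposition~\ref{equivalent to nice matrix}) and Li's theorem (Proposition~\ref{criterion_for_morita_equivalence}) make $A_\Theta$ strongly Morita equivalent to an honest tensor product $C(\TT^{r-d})\otimes A_D$ with $D$ non-degenerate and $d\ge 2$. This sidesteps any bundle-triviality question.
\item Nuclear dimension is Morita invariant (Remark~\ref{dim_nuc_morita_equivalent}).
\item The sharp bound comes from the A$\TT$ structure of $A_D$ given by Phillips' theorem, not from $\mathcal Z$-stability. The algebra $C(\TT^{k})\otimes A_D$ is an inductive limit of algebras $\bigoplus M_n(C(\TT^{k+1}))$, so its nuclear dimension is at most $k+1=r-d+1\le r-1$.
\end{enumerate}

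\textbf{What can be salvaged.} When $\Z^r/S_\sigma$ is torsion-free, your approach can be completed along these lines. Write $\Z^r=S_\sigma\oplus C$; the alternating bicharacter splits, so $C^*(\Z^r,\sigma)\cong C(\TT^s)\otimes C^*(C,\sigma|_C)$ with $\sigma|_C$ non-degenerate. Phillips' theorem plus the A$\TT$ inductive-limit bound then gives $s+1$. The Morita-equivalence input is genuinely needed only in the torsion case.
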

It is known that these twisted group $C^*$-algebras are exactly the higher dimensional noncommutative tori (see Definition \ref{higher dimensional non-commutative torus} and Remark \ref{all twisted group c* algebras are nc tori}). Our proof utilizes results of Elliott, Li, and Phillips \cite{elliott2006moritaequivalencesmoothnoncommutative,Morita_equivalence_nc_tori, simple_nc_torus}, which imply that when the cohomology class is not rational, the non-commutative tori is strongly Morita equivalent to a $C^*$-algebra of the form $C(X)\otimes D$, where $D$ is a unital, simple, A$\TT$ $C^*$-algebra. Finally, to deduce the upper bound, we heavily rely on a result of Tikuisis and Winter (\cite{nuclear_dimension_commutative_tensor_Z}, see also Theorem \ref{upper bound for dim_nuc of commutative tensor classifiable}). \par
The paper is structured as follows: In Section 2, we give background information on finitely generated, virtually abelian groups, group cohomology, projective representations, twisted group $C^{\ast}$-algebras,  and nuclear dimension. This information will be needed in the following Sections. In Section 3, we introduce the notion of a rational cohomology class, and give various equivalent conditions to that notion in the case of finitely generated, virtually abelian groups. These characterizations will heavily be used in Section 4, where we prove our main result. Finally, in Section 5, we prove Theorem B.

\section*{Acknowledgements}
We would like to thank Rufus Willett for letting us know about the results in \cite{nuclear_dimension_commutative_tensor_Z}. We would also like to thank the referee for their useful comments.

\section{Preliminaries}
\renewcommand{\thetheorem}{\thesection.\arabic{theorem}}
\subsection{Irreducible representations and Subhomogeneous $C^*$-algebras} \par
A $C^*$-algebra is \emph{subhomogeneous} if there exists $M>0$ such that for every irreducible representation $\pi:A \to B(\mc{H})$, we have that $\dim(\pi(A))\leq M$. If $A$ embeds in $C(X,M_n(\C))$ for some compact Hausdorff topological space $X$, and some $n\in \N$, then $A$ is subhomogeneous (\cite[IV. 1.4.4]{Bla10}). \par
Let $A$ be a subhomogeneous $C^*$-algebra, and $n\in \N$. We use $\Rep_n(A)$ to define the space of $n$-dimensional representations $A\to M_n(\C)$. We endow $\Rep_n(A)$ with the point-norm topology. We use $\Irr_n(A)$ to define the subspace of $\Rep_n(A)$ (with the subspace topology) that consists of irreducible representations. Two irreducible representations $\pi \colon A\to B(\mc{H})$ and $\pi'\colon A\rightarrow B(\mc{H}')$ are \emph{equivalent} if there exists a unitary operator $U \colon \mc{H}\to \mc{H}'$ such that $U\pi(a)=\pi'(a)U$ for all $a\in A$. In this case, we write $\pi\simeq \pi'$. We use $\Prim(A)$ to identify $\Irr(A)/\simeq$, endowed with the Jacobson topology\footnote{In \cite{Dix77}, and elsewhere in the literature, the quotient $\Irr(A)/\simeq$ is defined as the \emph{spectrum} of $A$ and is denoted with $\widehat{A}$. On the other hand, $\Prim(A)$ is defined as the set of primitive ideals of $A$. Because every primitive ideal is a kernel of some irreducible representation, there is a well-defined surjective map from $\widehat{A}$ onto $\Prim(A)$ that sends the class of an irreducible representation to its kernel (\cite[3.1.5.]{Dix77}). The Jacobson topology is a topology on $\Prim(A)$ (for the definition of this topology we refer the reader to \cite[3.1.]{Dix77}). But, for a subhomogeneous $C^*$-algebra $A$, the above map $\widehat{A}\to \Prim(A)$ is a homeomorphism (see \cite[3.1.6 (p.71)]{Dix77} and \cite[Thm IV.15.7 (p.339)]{Bla10}). This allows us to identify $\Irr(A)/\simeq$ with $\Prim(A)$, and define the Jacobson topology on $\Irr(A)/\simeq$.}. The subspace of $\Prim(A)$ that consists of the $n$-dimensional representations is denoted by $\Prim_n(A)$. For more information, we refer the reader to \cite[Chapter 3]{Dix77}.
\subsection{Two Cohomology}

We will provide a brief introduction to the two cohomology, focusing on information that will be needed in future sections. For more details, we refer the reader to ~\cite{coho}. 
\begin{definition}
If $G$ is a group, and $A$ is an abelian group, we define a {\em 2-cocycle} to be a function $\sigma \colon G\times G \to A$ satisfying the following equation
$$\partial\sigma(g,h,k):=\sigma(g,h)-\sigma(g,hk)+\sigma(gh,k)-\sigma(h,k)=0\,.$$
A {\em 2-coboundary} is a function that can be written in the form
$$\sigma(g,h)=\gamma(g)-\gamma(gh)+\gamma(h)$$
for some function $\gamma \colon G\rightarrow A$. Every 2-coboundary is a 2-cocycle, and $H^2(G;A)$ is defined to be the group of 2-cocycles, modulo the subgroup of 2-coboundaries. The group operation is pointwise addition. A cocycle is said to be {\em normalized} if $\sigma(e,g)=\sigma(g,e)=0$ for all $g\in G$. It is well known that any cohomology class can be represented by a normalized cocycle, and henceforth we assume that all cocycles mentioned are normalized. To see why, note that by setting $\partial\sigma(g,e,e)$ and $\partial\sigma(e,e,g)$ equal to zero, one can see that $\sigma(g,e)=\sigma(e,g)=\sigma(e,e)$. Since constants are clearly coboundaries, we can subtract the constant function of value $\sigma(e,e)$ to get a normalized cocycle. We denote the group of 2-cocycles as $Z^2(G;A)$. The cohomology class of a cocycle $\sigma$ will be denoted $[\sigma]$.
\end{definition}

Group cohomology, $H^2(\bullet; A)$ is a contravariant functor, and $H^2(G;\bullet)$ is a covariant functor with morphisms defined as follows. If $\varphi \colon G\rightarrow H$ is a group homomorphism then $\varphi^*:H^2(H;A)\rightarrow H^2(G;A)$ is defined by $\varphi^*([\sigma])=[\sigma\circ(\varphi\times\varphi)]$. If $T \colon A\rightarrow B$ is a morphism of abelian groups, we define $T_* \colon H^2(G;A)\rightarrow H^2(G;B)$ by $T_*([\sigma])=[T\circ\sigma]$.

\begin{definition}
Define $C_k(G)$ to be formal linear combinations of elements of~$G^k$. We write a typical element of $C_2(G)$ as
$$\sum_{j=1}^Nx_j[a_j|b_j]$$
with $a_j,b_j\in G$ and $x_j\in\Z$. Define $\partial_2 \colon C_2(G)\rightarrow C_1(G)$ by the equation
$$\partial_2[a|b]=[a]-[ab]+[b]$$
and $\partial_3\colon C_3(G)\rightarrow C_2(G)$ by
$$\partial_3[a|b|c]=[a|b]-[a|bc]+[ab|c]-[b|c].$$
Then $H_2(G;\Z):=\ker(\partial_2)/\im(\partial_3)$.
\end{definition}

\begin{definition}\label{kronecker pairing}
The {\em Kronecker pairing} between 2-homology and 2-cohomology is a bilinear map from $H^2(G;A)\times H_2(G;\Z)$ to $A$ defined by the formula
$$\left\langle\sigma,\sum_{j=1}^Nx_n[a_j|b_j]\right\rangle:=\sum_{j=1}^Nx_j\sigma(a_j,b_j)$$
where $\sigma$ is a cocycle, and $\sum_{j=1}^Nx_n[a_j|b_j]$ is a cycle. The value does not depend on either choice of representative.
\end{definition}

This induces a map $\kappa\colon H^2(G;A)\rightarrow  \Hom(H_2(G;\Z),A)$. This map appears in the Universal Coefficient Theorem, stated below:

\begin{theorem}\cite[Theorem 53.1]{eat}\label{UCT AT} There is an exact sequence
$$\begin{tikzcd}
0\arrow{r}{}&\Ext(H_1(G;\Z),A)\arrow{r}{}& H^2(G;A)\arrow{r}{\kappa}&\Hom(H_2(G;\Z),A)\arrow{r}{}&0
\end{tikzcd}.$$
The maps in this sequence are natural in the sense that they commute with maps induced by a group homomorphism between the groups $G$ and $H$, or a homomorphism of abelian groups from $A$ to $B$.
\end{theorem}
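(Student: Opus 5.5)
The plan is to reduce the statement to the purely algebraic Universal Coefficient Theorem for a chain complex of free abelian groups, applied to the bar complex $C_\ast(G)$ introduced above.

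\textbf{Step 1: identify both sides with the bar complex.} Extend the complex by $C_0(G)=\Z$ and $\partial_1[a]=[\,]-[\,]=0$, which is the integral bar differential for the trivial action. Each $C_k(G)$ is free abelian on $G^k$. Then $H_1(G;\Z)=C_1/\im\partial_2\cong G^{ab}$, and $H_2(G;\Z)=\ker\partial_2/\im\partial_3$ is exactly the group defined above. Dually, a function $\sigma\colon G\times G\to A$ is the same thing as a homomorphism $C_2(G)\to A$. Under this identification $\sigma\circ\partial_3$ is the expression $\partial\sigma$ from the cocycle definition, and $\gamma\circ\partial_2$ is the coboundary formula. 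Hence $H^2(G;A)=H^2(\Hom(C_\ast(G),A))$. The only care needed here is matching signs and conventions with the paper's definitions; I expect this to be the main bookkeeping point rather than a real obstacle. The Kronecker pairing is then evaluation of cochains on chains, and $\kappa$ is the induced map on (co)homology.

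\textbf{Step 2: the algebraic UCT.} Write $C=C_\ast(G)$, with cycles $Z_n$ and boundaries $B_n$. Both are subgroups of free abelian groups, hence free. Consequently $0\to Z_n\to C_n\xrightarrow{\partial} B_{n-1}\to 0$ splits. Viewing $Z_\ast$ and $B_{\ast-1}$ as complexes with zero differential, we get a short exact sequence of complexes, and applying $\Hom(-,A)$ keeps it exact because of the splitting:
\[0\to \Hom(B_{\ast-1},A)\to \Hom(C_\ast,A)\to \Hom(Z_\ast,A)\to 0.\]
Since the outer complexes have zero differential, the long exact cohomology sequence reads
\[\Hom(B_{1},A)\xrightarrow{\delta}\Hom(Z_1,A)\to \cdots\to \Hom(B_{2},A)\xrightarrow{\delta}\Hom(Z_2,A)\to\cdots,\]
with $H^2(\Hom(C,A))$ sitting between $\Hom(Z_1,A)$ and $\Hom(Z_2,A)$. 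A short diagram chase shows that each connecting map $\delta$ is, up to sign, restriction along the inclusion $i\colon B_n\hookrightarrow Z_n$. This yields
\[0\to \operatorname{coker}(i_1^\ast)\to H^2(\Hom(C,A))\to \ker(i_2^\ast)\to 0.\]
Now $0\to B_n\to Z_n\to H_n\to 0$ is a free resolution of $H_n$. Therefore $\ker(i_2^\ast)=\Hom(H_2,A)$ and $\operatorname{coker}(i_1^\ast)=\Ext(H_1,A)$. One checks that the map to $\Hom(H_2,A)$ is restriction of a cocycle to cycles, which is exactly $\kappa$.

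\textbf{Step 3: naturality.} A homomorphism $\varphi\colon G\to H$ induces a chain map $C_\ast(G)\to C_\ast(H)$, and a homomorphism $T\colon A\to B$ acts by postcomposition. The splitting in Step 2 is not natural. However, the maps in the final sequence are natural: restriction to cycles, and the map $\Ext(H_1,A)\to H^2$ described via extending functionals on $B_1$ (equivalently via the connecting homomorphism). Neither depends on the choice of splitting, so both commute with induced maps in either variable, as required.
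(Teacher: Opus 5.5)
Your argument is correct and is essentially what the paper relies on. The paper gives no proof and simply cites the universal coefficient theorem for free chain complexes (Munkres, Thm.~53.1). You supply that theorem's standard proof via the split sequences $0\to Z_n\to C_n\to B_{n-1}\to 0$, and you also make explicit the identification of the paper's cocycle and coboundary conventions with $\Hom(C_\ast(G),A)$ for the bar complex, which the citation leaves implicit.

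One slip to fix: the connecting maps go $\Hom(Z_n,A)\to\Hom(B_n,A)$ by restriction, so the relevant segment is $\Hom(Z_1,A)\to\Hom(B_1,A)\to H^2(\Hom(C,A))\to\Hom(Z_2,A)\to\Hom(B_2,A)$, not the reversed arrows you displayed. Your extracted sequence $0\to\operatorname{coker}(i_1^\ast)\to H^2\to\ker(i_2^\ast)\to 0$ is nonetheless the right one.
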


Since $H_1(G;\Z)$ is naturally isomorphic to the abelianization of $G$,~\cite[page 36]{coho}, we may take this as the definition.

\begin{definition}A {\em central extension} of $G$ by $A$ is a short exact sequence
$$\begin{tikzcd}
e\arrow{r} & A
\arrow{r}{} &
\widetilde{G} \arrow{r}{}
& G\arrow{r}{}& e
\end{tikzcd}$$
where the image of $A$ in $\widetilde{G}$ is central in $\widetilde{G}$. We say that two central extensions are {\em equivalent} if we can make a commutative diagram as follows:
$$\begin{tikzcd}
e\arrow{r} & A
\arrow{r}{}\arrow{d}{\id_A} &
\widetilde{G}_1\arrow{d}{\cong} \arrow{r}{}
& G\arrow{r}{}\arrow{d}{\id_G}& e
\\
e\arrow{r} & A
\arrow{r}{} &
\widetilde{G}_2 \arrow{r}{}
& G\arrow{r}{}& e.
\end{tikzcd}$$
\end{definition}

\begin{theorem}[{\cite[Theorem IV.3.12]{coho}}] As a set $H^2(G;A)$ is in bijection with the equivalence classes of central extensions of $G$ by $A$.
\end{theorem}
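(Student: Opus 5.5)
We construct maps in both directions and check that they are mutually inverse on classes. Throughout we use normalized cocycles, as arranged in the definition of $H^2(G;A)$.

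\emph{From cocycles to extensions.} Given a normalized cocycle $\sigma\in Z^2(G;A)$, let $\widetilde G_\sigma=A\times G$ as a set, with multiplication
$$(a,g)(b,h)=(a+b+\sigma(g,h),\,gh).$$
We then verify four things.
\begin{enumerate}
\item Associativity reduces exactly to $\partial\sigma(g,h,k)=0$.
\item $(0,e)$ is the identity, because $\sigma$ is normalized.
\item The inverse is $(a,g)^{-1}=(-a-\sigma(g,g^{-1}),g^{-1})$. To see that this is a two-sided inverse, use $\partial\sigma(g,g^{-1},g)=0$, which gives $\sigma(g,g^{-1})=\sigma(g^{-1},g)$.
\item The map $a\mapsto(a,e)$ is an injective homomorphism onto a central subgroup, since $\sigma(e,h)=\sigma(h,e)=0$. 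The projection $(a,g)\mapsto g$ is a surjective homomorphism with kernel $A\times\{e\}$.
\end{enumerate}

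Next, if $\sigma'=\sigma+\partial\gamma$, i.e.\ $\sigma'(g,h)=\sigma(g,h)+\gamma(g)-\gamma(gh)+\gamma(h)$, define $\Phi(a,g)=(a+\gamma(g),g)$ from $\widetilde G_{\sigma'}$ to $\widetilde G_\sigma$. A one-line computation shows $\Phi$ is a homomorphism. It is the identity on $A$ and covers $\id_G$, so it is an equivalence of extensions. If $\sigma'$ is normalized, the coboundary $\partial\gamma$ is normalized too, which forces $\gamma(e)=0$; this is not even needed, since $\Phi$ is an equivalence regardless. So $[\sigma]\mapsto[\widetilde G_\sigma]$ is well defined.

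\emph{From extensions to cocycles.} Given a central extension $A\xrightarrow{\iota}\widetilde G\xrightarrow{p}G$, choose a set-theoretic section $s\colon G\to\widetilde G$ with $s(e)=e$. Define $\sigma_s$ by
$$s(g)s(h)=\iota(\sigma_s(g,h))\,s(gh).$$
Expanding $s(g)s(h)s(k)$ in the two possible ways and using centrality of $\iota(A)$ gives the cocycle identity; the choice $s(e)=e$ makes $\sigma_s$ normalized. Any other section with the same property has the form $s'(g)=\iota(\gamma(g))s(g)$, and then $\sigma_{s'}=\sigma_s+\partial\gamma$ (up to the sign convention of $\partial$). Finally, an equivalence $\theta\colon\widetilde G_1\to\widetilde G_2$ carries a section $s$ of the first extension to the section $\theta\circ s$ of the second, with the same cocycle. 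Hence the class $[\sigma_s]$ depends only on the equivalence class of the extension.

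\emph{Mutual inverses.} For $\widetilde G_\sigma$, the section $s(g)=(0,g)$ recovers exactly $\sigma$. Conversely, for an extension $\widetilde G$ with section $s$, the map $(a,g)\mapsto\iota(a)s(g)$ from $\widetilde G_{\sigma_s}$ to $\widetilde G$ is a homomorphism by the defining relation of $\sigma_s$. It is bijective because every element of $\widetilde G$ is uniquely of the form $\iota(a)s(g)$, and it commutes with the maps from $A$ and to $G$, so it is an equivalence.

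The only real care needed is bookkeeping of sign and order conventions. The sign in the coboundary formula $\gamma(g)-\gamma(gh)+\gamma(h)$ must match the change of section, and associativity must match the paper's $\partial\sigma$ formula. Since $A$ is abelian and central, both checks are routine.
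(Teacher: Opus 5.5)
Your proof is correct: you use the twisted product $A\times G$ in one direction and cocycles from normalized sections in the other, which is the standard argument of Brown's Theorem IV.3.12 (the paper only cites it and gives no proof of its own). One small correction: when both cocycles are normalized, $\gamma(e)=\partial\gamma(e,e)=0$ is forced, and it is exactly what makes $\Phi(a,e)=(a+\gamma(e),e)$ the identity on $A$, so you should drop the aside that this ``is not even needed.''
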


Given an explicit central extension, we may find a cocycle representative of the corresponding element of $H^2(G;A)$ as follows. Pick $\theta$ to be a set-theoretic section from $G$ to $\widetilde{G}$. Then viewing $A$ as a subset of $\widetilde{G}$ define
$$\sigma(g,h)=\theta(g)\theta(h)\theta(gh)^{-1}\in A\,.$$
By \cite[Equation IV.3.3]{coho}, this is a cocycle representative of the cohomology class corresponding to this central extension.\footnote{In order to guarantee that $\sigma$ is normalized we pick $\theta$ so that $\theta(e)=e$.} The zero cohomology class corresponds to the extension
$$\begin{tikzcd}
e\arrow{r} & A
\arrow{r}{} &
A\times G \arrow{r}{}
& G\arrow{r}{}& e
\end{tikzcd}$$
with the obvious maps.

\begin{proposition}\cite[Chapter IV.3 Exercise 1]{coho}\label{exercise}
Consider a 2-cohomology class $[\sigma]\in H^2(G;A)$ represented as a central extension:
$$\begin{tikzcd}
e\arrow{r} & A
\arrow{r}{} &
\widetilde{G} \arrow{r}{}
& G\arrow{r}{}& e.
\end{tikzcd}$$
If $\pi \colon H\rightarrow G$ is a group homomorphism and $T:A\rightarrow B$ is a homomorphism of abelian groups, then the pullback $\pi^*([\sigma])\in H^2(H;A)$ is given by the unique extension fitting into the top row of the diagram
$$\begin{tikzcd}
e\arrow{r} & A
\arrow{r}{}\arrow{d}{\id_A} &
\widetilde{H}\arrow{d} \arrow{r}{}
& H\arrow{r}{}\arrow{d}{\pi}& e
\\
e\arrow{r} & A
\arrow{r}{} &
\widetilde{G} \arrow{r}{}
& G\arrow{r}{}& e
\end{tikzcd}$$
and the push-forward $T_*([\sigma])\in H^2(G;B)$ is given by the unique extension fitting into the bottom row of the diagram:
$$\begin{tikzcd}
e\arrow{r} & A
\arrow{r}{}\arrow{d}{T} &
\widetilde{G}\arrow{d} \arrow{r}{}
& G\arrow{r}{}\arrow{d}{\id_G}& e
\\
e\arrow{r} & B
\arrow{r}{} &
\widetilde{G}' \arrow{r}{}
& G\arrow{r}{}& e.
\end{tikzcd}$$
\end{proposition}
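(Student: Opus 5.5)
We verify both claims directly from the cocycle description. Fix a normalized cocycle $\sigma\in Z^2(G;A)$ representing the class of the bottom extension $A\to\widetilde G\to G$. Since a class does not depend on the chosen section, we choose a normalized set-theoretic section $\theta\colon G\to\widetilde G$ with $\theta(e)=e$ and $\theta(g)\theta(h)=\sigma(g,h)\theta(gh)$. By \cite[Equation IV.3.3]{coho}, this $\sigma$ represents the class.

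\emph{Pullback.} Set $\widetilde H:=\widetilde G\times_G H=\{(x,h): p(x)=\pi(h)\}$, where $p\colon\widetilde G\to G$ is the quotient map. Embed $A$ in $\widetilde H$ by $a\mapsto(a,e)$; it lands in $\widetilde H$ and is central there because $A$ is central in $\widetilde G$. The projection $\widetilde H\to H$ is surjective because $p$ is, and its kernel is $\{(x,e):p(x)=e\}=A$. So the top row is a central extension, and the projection $\widetilde H\to\widetilde G$ makes the diagram commute.

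Choose the section $\theta_H(h)=(\theta(\pi(h)),h)$. Then
\[
\theta_H(h)\theta_H(k)\theta_H(hk)^{-1}=\bigl(\sigma(\pi h,\pi k),e\bigr),
\]
so this extension has cocycle $\sigma\circ(\pi\times\pi)$, which represents $\pi^*[\sigma]$.

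Uniqueness up to equivalence: any extension $\widetilde H'$ fitting into the top row maps to $\widetilde G\times_G H$ via the pair of its two given maps. This map is the identity on $A$ and covers $\id_H$, so it is an isomorphism by the five lemma.

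\emph{Push-forward.} Set $\widetilde G':=(B\times\widetilde G)/N$, where $N=\{(T(a)^{-1},a):a\in A\}$. Since $A$ and $B$ are central, $N$ is a central, hence normal, subgroup. Define:
\begin{itemize}
\item $B\to\widetilde G'$ by $b\mapsto[(b,e)]$, which is injective because $(b,e)\in N$ forces $a=e$ and then $b=e$;
\item $\widetilde G'\to G$ by $[(b,x)]\mapsto p(x)$, which is well defined and surjective.
\end{itemize}
The kernel of $\widetilde G'\to G$ consists of the classes $[(b,a)]=[(bT(a),e)]$, i.e. exactly the image of $B$, and this image is central. The map $x\mapsto[(e,x)]$ then makes the diagram commute: on $A$ it sends $a$ to $[(e,a)]=[(T(a),e)]$.

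Choose the section $\theta'(g)=[(e,\theta(g))]$. Then
\[
\theta'(g)\theta'(h)\theta'(gh)^{-1}=[(e,\sigma(g,h))]=[(T\sigma(g,h),e)],
\]
so this extension has cocycle $T\circ\sigma$, which represents $T_*[\sigma]$.

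Uniqueness: given any bottom-row extension $\widetilde G''$ with compatible maps $\beta\colon B\to\widetilde G''$ and $\phi\colon\widetilde G\to\widetilde G''$, the map $(b,x)\mapsto\beta(b)\phi(x)$ is a homomorphism because $\beta(B)$ is central. It kills $N$, since $\phi(a)=\beta(T(a))$ for $a\in A$. It therefore descends to a map $\widetilde G'\to\widetilde G''$ that is the identity on $B$ and covers $\id_G$, hence an isomorphism by the five lemma.

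The only non-routine step is checking that the quotient construction is well defined, which depends on $N$ being normal. That follows from centrality of $A$ and $B$.
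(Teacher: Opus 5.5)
Your argument is correct and complete. The fibre product $\widetilde G\times_G H$ and the quotient $(B\times\widetilde G)/N$, with the sections $\theta_H$ and $\theta'$, yield exactly the cocycles $\sigma\circ(\pi\times\pi)$ and $T\circ\sigma$. The comparison maps you build from an arbitrary fitting extension, together with the short five lemma (which holds for nonabelian groups), give uniqueness up to equivalence.

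The paper gives no proof of its own; it only cites \cite[Chapter IV.3, Exercise 1]{coho}. Your fibre-product/pushout construction is the standard solution to that exercise. One cosmetic point: a prescribed representative $\sigma$ is realized by some section because changing $\theta$ by a function $\gamma\colon G\to A$ changes the cocycle by a coboundary. Alternatively, you can simply define $\sigma$ from $\theta$.
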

We end the subsection by giving some background information regarding the projective and special unitary groups. This information will be useful in the proof of Proposition \ref{fd=>rational}.
\begin{remark}\label{PU(N)extension}
\begin{enumerate}
    \item Let $\mc{H}$ be a Hilbert space. The {\em projective unitary group}, is defined as $$\PU(\mc{H}):=\U(\mc{H})/\ZZ(\U(\mc{H}))\cong\U(\mc{H})/\TT.$$
    Recall that $$\ZZ(U(\mc{H}))=\{z\cdot 1_{\mc{H}}:z\in \TT\}\cong \TT. $$
\item Let $n\in \N$. The \emph{special unitary group} is defined as $$\SU(n):=\{A\in \U(n): \det(A)=1\}\,.$$ Notice that $$\ZZ(\SU(n))=\{\diag(\omega,...,\omega):\omega^n=1\}\cong \Z_n\,.$$
Let $\pi\colon\SU(n)\rightarrow \PU(n)$ defined as the composition of the natural embedding $\SU(n)\hookrightarrow \U(n)$ with the quotient map $\U(n)\rightarrow \PU(n)$. Notice that $\pi$ is surjective and $\ker(\pi)=\ZZ(\SU(n))\cong \Z_n$.

\item From all the above, we deduce the following commutative diagram
$$\begin{tikzcd}
e\arrow{r}&\Z_n\arrow{r}\arrow[d]&\SU(n)\arrow{d}\arrow{r}&\PU(n)\arrow{r}\arrow[d,"\id"]&e\\
e\arrow{r}&\TT\arrow{r}&\U(n)\arrow{r}&\PU(n)\arrow{r}&e
\end{tikzcd}$$
where the maps $\Z_n\hookrightarrow \TT$ and $\SU(n)\hookrightarrow \U(n)$ are the natural embeddings. \par
Notice that each short exact sequence is central.
\end{enumerate}
\end{remark} 
\subsection{Projective representations and twisted group $C^*$-algebras}

\begin{definition}
A {\em projective representation} of $G$ is a function $\varphi \colon G\rightarrow\U(\mc{H})$ such that for all $g,h\in G$, $\varphi(g)\varphi(h)\varphi(gh)^{-1}\in\TT1_{\mc{H}}$. Two projective representations $\varphi$, $\psi$ are {\em equivalent} if there is some function $\alpha \colon G\rightarrow \TT$ such that for all $g\in G$, $\varphi(g)=\alpha(g)\psi(g)$. 
\end{definition}
For a survey on projective representations we refer the reader to~\cite{projectiveRep}. \par
To each projective representation, $\varphi$, we may associate a cocycle $\sigma \colon G\times G\rightarrow\TT$ by the equation $\sigma(g,h)=\varphi(g)\varphi(h)\varphi(gh)^{-1}\in\TT$.\footnote{If $\varphi(e)=1_{\mc{H}}$ then $\sigma$ is normalized.} It is easy to check that this satisfies the cocycle identity and that if two projective representations are equivalent, their associated cocycles are in the same cohomology class.

To each projective representation $\varphi$, we may associate a genuine group homomorphism from $\hat\varphi \colon G\rightarrow\PU(\mc{H})$ defined by taking the image of $\varphi$ in $\PU(\mc{H})$. Clearly, $\hat\varphi=\hat\psi$ if and only if $\varphi$ is equivalent to $\psi$.

The following is likely well-known to experts.

\begin{proposition}\label{cocycle_pullback}
Let $\varphi$ be a projective representation, and let $\sigma$ be the associated cocycle. Then the cohomology class of $\sigma$ is given by the class of the extension in the bottom row of the diagram in Remark~\ref{PU(N)extension}, pulled back by $\hat{\varphi}$.
\end{proposition}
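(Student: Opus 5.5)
Our plan is to use Proposition~\ref{exercise} together with the recipe for extracting a cocycle from an explicit central extension via a set-theoretic section. Write $\mc{H}$ for the Hilbert space of $\varphi$ and $q\colon \U(\mc{H})\to\PU(\mc{H})$ for the quotient map. The bottom row of Remark~\ref{PU(N)extension}, in the form $e\to\TT\to\U(\mc{H})\xrightarrow{q}\PU(\mc{H})\to e$, is a central extension and so determines a class $[\tau]\in H^2(\PU(\mc{H});\TT)$. By Proposition~\ref{exercise}, $\hat\varphi^*([\tau])$ is represented by the top row of a pullback diagram over $\hat\varphi$. The natural candidate is the fibre product
$$\widetilde{G}=\{(g,U)\in G\times\U(\mc{H}) : \hat\varphi(g)=q(U)\}.$$
It has projection $\widetilde{G}\to G$, map $\widetilde{G}\to\U(\mc{H})$ given by $(g,U)\mapsto U$, and inclusion $\TT\to\widetilde{G}$ given by $z\mapsto(e,z1_{\mc{H}})$.

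First, we check that this is a central extension of $G$ by $\TT$ making the diagram commute with $\id_\TT$ on the left. Surjectivity onto $G$ holds because $(g,\varphi(g))\in\widetilde{G}$. The kernel of the projection is $\{(e,U): q(U)=e\}=\TT$. Centrality follows since $z1_{\mc{H}}$ is central in $\U(\mc{H})$. By the uniqueness in Proposition~\ref{exercise}, this extension represents $\hat\varphi^*([\tau])$.

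Second, we compute a cocycle for this extension. Choose the section $\theta(g)=(g,\varphi(g))$; we may assume $\varphi(e)=1_{\mc{H}}$, so that $\theta(e)=e$. Then
$$\theta(g)\theta(h)\theta(gh)^{-1}=\bigl(e,\varphi(g)\varphi(h)\varphi(gh)^{-1}\bigr)=(e,\sigma(g,h)1_{\mc{H}}).$$
Under the identification of $\TT$ with its image, this is exactly $\sigma(g,h)$. By \cite[Equation IV.3.3]{coho}, $\sigma$ therefore represents the class of $\widetilde{G}$, that is, $[\sigma]=\hat\varphi^*([\tau])$.

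The only delicate point is the bookkeeping. One has to verify that the fibre product genuinely fits the diagram of Proposition~\ref{exercise}, so that the uniqueness claim applies. One also has to keep sign and order conventions consistent: the section formula $\theta(g)\theta(h)\theta(gh)^{-1}$ must match the definition $\sigma(g,h)=\varphi(g)\varphi(h)\varphi(gh)^{-1}$. Here both are written multiplicatively in $\TT$ and agree verbatim, so no inversion issue arises. If $\varphi(e)\neq 1_{\mc{H}}$, we first replace $\varphi$ by the equivalent projective representation $\varphi(e)^{-1}\varphi$. Since $\varphi(e)$ is a scalar, this rescaling leaves $\hat\varphi$ and the cohomology class of $\sigma$ unchanged.
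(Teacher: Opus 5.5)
Your proof is correct, but it takes a different route from the paper.

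The paper works entirely at the level of cocycles. It picks a set-theoretic section $\theta$ of $\U(\mc{H})\to\PU(\mc{H})$ and replaces $\varphi$ by the equivalent lift $\psi=\theta\circ\hat\varphi$. The cocycle of $\psi$ is then literally $\eta\circ(\hat\varphi\times\hat\varphi)$, where $\eta(x,y)=\theta(x)\theta(y)\theta(xy)^{-1}$ is the cocycle of the bottom row. This is $\hat\varphi^*([\eta])$ by the very definition of pullback on cocycles. The proof closes by noting that equivalent projective representations have cohomologous cocycles.

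You instead work at the level of extensions. You build the fibre product $\widetilde{G}$, identify it as the pullback extension via Proposition~\ref{exercise}, and read off $\sigma$ exactly from the canonical section $g\mapsto(g,\varphi(g))$.

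What each route buys:
\begin{itemize}
\item Your version needs no auxiliary section of $\U(\mc{H})\to\PU(\mc{H})$. It also needs no comparison between two different lifts of $\hat\varphi$, apart from the harmless scalar renormalization when $\varphi(e)\neq 1_{\mc{H}}$. And it exhibits the pullback extension concretely.
\item The cost is that you must check that the fibre product is a central extension fitting the diagram. You also rely on the extension--cocycle correspondence being compatible with pullbacks, which is what Proposition~\ref{exercise} supplies.
\item The paper's argument uses only the cocycle-level definition $\hat\varphi^*([\eta])=[\eta\circ(\hat\varphi\times\hat\varphi)]$, so it is slightly more self-contained.
\end{itemize}
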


\begin{proof}
Pick $\theta$ to be any set-theoretic section from $\PU(\mc{H})$ to $\U(\mc{H})$. Let $\psi=\theta\circ\hat\varphi$. Clearly, $\psi$ is a projective representation that is equivalent to $\varphi$, and the cocycle associated to $\psi$ is given by $\chi(g,h)=\theta(\hat\varphi(g))\theta(\hat\varphi(h))\theta(\hat{\varphi}(gh))^{-1}\in\TT$ for all $g, h \in G$. The cocycle associated with the extension in the bottom row of the diagram in Remark~\ref{PU(N)extension} is given by $\eta(g,h)=\theta(g)\theta(h)\theta(gh)^{-1}\in\TT$ for all $g, h \in G$, and one can see that $\chi$ is the pullback of $\eta$ by $\hat{\varphi}$. Since $\varphi$ and $\psi$ are equivalent, it follows that $\sigma$ is in the same cohomology class as $\eta$.
\end{proof}

\begin{definition}
Let $G$ be a discrete group and $\sigma\in Z^2(G;\TT)$. The {\em twisted group algebra with cocycle $\sigma$} denoted $\C[G,\sigma]$, is the algebra of finite linear combinations of elements of the group (denoted $\eta_g$ with $g\in G$) with multiplication given by
$$\eta_g\eta_h=\sigma(g,h)\eta_{gh} \quad g, h \in G.$$
There is an adjoint structure given by
$$\eta_g^*=\sigma(g,g^{-1})^{-1}\eta_{g^{-1}} \quad g \in G.$$
\end{definition}

It is easy to check that if $\sigma$ and $\chi$ are in the same cohomology class, then $\C[G,\sigma]\cong\C[G,\chi]$ and if $$\sigma(g,h)=\chi(g,h)\cdot\gamma(g)\cdot\gamma(gh)^{-1}\cdot\gamma(h),$$
then $\eta_g\mapsto\gamma(g)\eta_g$ is an isomorphism from $\C[G,\sigma]$ to $\C[G,\chi]$.

\begin{definition}
The {\em twisted full group $C^*$-algebra}, denoted $C^*(G,\sigma)$ is defined to be the completion of $\C[G,\sigma]$ with the norm given by the supremum of the induced norms given by any *-representation of $\C[G,\sigma]$. From the above, we deduce that if $\sigma$ and $\chi$ are in the same cohomology class, then $C^*(G,\sigma)\cong C^*(G,\chi)$.
\end{definition}
\begin{remark}
The representations of $C^*(G,\sigma)$ are in one-to-one correspondence with the projective representations of $G$ with cocycle $\sigma$. Indeed, by utilizing the above definitions, any projective representation of $G$ with cocycle $\sigma$ can be extended to a representation $\C[G,\sigma]$. Thus, we obtain a representation of $C^*(G,\sigma)$ by applying the universal property of the twisted full group $C^*$-algebra. Conversely, if $\pi \colon C^*(G,\sigma)\to B(\mathcal{H})$ is a representation, then its restriction to $G$ is a projective representation.
\end{remark}

\begin{remark}\label{reps_of_central_extension}
Suppose that $\sigma\in H^2(G;\TT)$ is a cocycle taking values in $n$th roots of unity. 
Then we may think of it as representing a cohomology class in $H^2(G;\Z_n)$ and view that class as an extension,
$$\begin{tikzcd}
e\arrow{r}&\Z_n\arrow{r}&\widetilde{G}\arrow{r}{q}&G\arrow{r}&e.
\end{tikzcd}$$
Let $\chi\colon \Z_n\rightarrow \TT$ be the character that sends the generator of $\Z_n$ to $\exp(\frac{2\pi i}{n})$.
Twisted representations of $G$ with cocycle $\sigma$ correspond to representations of $\widetilde{G}$ whose restriction to $\Z_n$ is $\chi$. Indeed, $\sigma$ corresponds to a set-theoretic section $\theta$ from $G$ to $\widetilde{G}$. By this correspondence, we see that $\chi(\theta(g)\theta(h)\theta(gh)^{-1})=\sigma(g,h)$.
Consequently, for any representation $\pi$ of $\widetilde{G}$ that restricts to $\chi$, the composition $\pi\circ\theta$ is a projective representation with cocycle $\sigma$. Conversely, given a projective representation $\varphi$ of $G$ with cocycle $\sigma$, one can check that
$g\mapsto \varphi(q(g))\chi(g\cdot\theta(q(g))^{-1})$ is a representation of $\widetilde{G}$ that restricts to $\chi$. 
\end{remark}
\begin{remark}\label{correspondence}

From the above two preceding remarks, we conclude that there exists $\omega\in \TT$ such that $\omega^n=1$ such that representations of $C^*(G,\sigma)$ correspond to representations of $C^*(\widetilde{G})$, where the generator of $\Z_n$ (viewed as a subgroup of $\widetilde{G}$), which we will call $a$, is sent to $\omega\cdot1$. Moreover, we can define a surjective $*$-homomorphism from $C^*(\widetilde{G})$ to $C^*(G,\sigma)$. Indeed, we can define a mutliplicative map $\varphi\colon\C[\widetilde{G}]\to C^*(G,\sigma)$ via the formula in Remark~\ref{reps_of_central_extension}. The Universal property of $C^*(\widetilde{G})$ allows us to extend to a surjective $*$-homorohpism $\Phi\colon C^*(\widetilde{G})\rightarrow C^*(G,\sigma)$. By~\cite[Theorem 5.5.7]{murph} it follows that the induced inclusion from $\Prim(C^*(G,\sigma))$ to $\Prim(C^*(\widetilde{G}))$ is an embedding. Hence, $\Prim(C^*(G,\sigma))$ is homeomorphic to the subspace of $C^*(\widetilde{G})$ that consists of the representations that send $a$ to $\omega\cdot1$.
\end{remark}

\subsection{Finitely generated, virtually abelian groups} 
A group $G$ is \emph{virtually abelian} if there exists an abelian subgroup $H$ of finite index. In the case $G$ is finitely generated, then so is $H$. Hence, $H$ contains a torsion-free subgroup of finite index, say $K\cong\Z^r$. By taking normal core of $K$ in $G$, there exists $N\unlhd G$ such that $[G:N]<\infty$ and $N\leq K$. Because $N$ has finite index in $K\cong \Z^r$, it follows that $N\cong \Z^r$. We gather the above observations into the following remark.
\begin{remark}\label{virtually_abelian_general_form}
    A group $G$ is finitely generated and virtually abelian if and only if $G$ fits into a short exact sequence of the form
    \begin{equation}\label{eqn:virtually_abelian_defn}
         e\rightarrow \Z^r\stackrel{i}{\rightarrow} G\stackrel{s}{\rightarrow}D\rightarrow e
    \end{equation}
    with $|D|<\infty$. 
\end{remark}

The number $r$ above is the {\em rank of $G$}. It is also called the \emph{Hirsch length} (we write $h(G)=r$). It coincides with the more general definition of Hirsch length found in~\cite{hillman_finite_Hirsch_length_old}. \par
Let $G$ be virtually abelian and identify $i(\Z^r)\unlhd  G$ with $\Z^r$ where we treat $\Z^r$ as a multiplicative group. Because $\Z^r$ is normal in $G$, there is a natural action of $G$ on $\Z^r$ defined by $g\cdot a=gag^{-1}$ for all $g\in G$, $a\in \Z^r$. Let $\gamma \colon D\rightarrow G$ be a section with $\gamma(1_D)=1_G$. Then, the action of $G$ on $\Z^r$ $(G\curvearrowright \Z^r$) descends to an action of $D$ on $\Z^r$ by $d\cdot a=\gamma(d)\cdot a$. Notice the induced action is independent of the section we choose. 

We also have an induced (left) action $G\curvearrowright\widehat{\Z}^r$ given by 
\[(g\cdot \chi)(a)=\chi(g^{-1}ag)\text{ for all }g\in G, \chi\in\widehat{\Z}^r, a\in \Z^r.\]
This action descends to an action of $D$ on $\widehat{\Z}^r$. For each $\chi\in \widehat{\Z}^r$, we define
\begin{equation}\label{eqn:stabilizer_and_orbit}
G_{\chi}=\{g\in G\,\colon\,g\cdot \chi=\chi\}\quad\text{ and }\quad\mc{O}_{\chi}=\{g\cdot\chi\,\colon\,g\in G\}
\end{equation}
to be the stabilizer subgroup associated to $\chi$ and the orbit associated to $\chi$, respectively. We observe that $|\mc{O}_{\chi}|={|G/ G_{\chi}|}$, $\Z^r\leq G_{\chi}$, and $|\mc{O}_{\chi}|$ divides $|D|$ for all $\chi\in\widehat{\Z}^r$. \par
The following Lemma is probably known to experts, but we include its proof for the sake of completion. It will be needed in Section 3.
\begin{lemma}\label{extVA}
Suppose that $G$ is a finitely generated virtually abelian group, and the following is a central extension
$$\begin{tikzcd}
e\arrow{r}&\Z_n\arrow{r}&\widetilde{G}\arrow{r}&G\arrow{r}&e.
\end{tikzcd}$$
Then $\widetilde{G}$ is also virtually abelian. Moreover, the cohomology class of the extension is trivial when restricted to a finite-index free abelian subgroup.
\end{lemma}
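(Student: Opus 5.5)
By Remark \ref{virtually_abelian_general_form}, fix a normal subgroup $N\cong\Z^r$ of finite index in $G$. Let $q\colon\widetilde{G}\to G$ be the quotient map and set $\widetilde{N}=q^{-1}(N)$. Then $[\widetilde{G}:\widetilde{N}]=[G:N]<\infty$, and $\widetilde{N}$ fits into a central extension
$$e\to\Z_n\to\widetilde{N}\to \Z^r\to e.$$
By Proposition \ref{exercise}, this extension represents the pullback of the class along the inclusion $N\hookrightarrow G$. The whole statement therefore reduces to central extensions of $\Z^r$ by $\Z_n$.

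The plan is to find a finite-index subgroup $M\le N$, with $M\cong\Z^r$, over which the extension splits. Choose lifts $\tilde e_1,\dots,\tilde e_r\in\widetilde{N}$ of the standard generators of $\Z^r$. Since $\Z_n$ is central and $\Z^r$ is abelian, each commutator $[\tilde e_i,\tilde e_j]$ lies in $\Z_n$. Centrality of $\Z_n$ makes the commutator bimultiplicative on $\widetilde N$, because $\widetilde N$ is a central extension of an abelian group and so has nilpotency class at most $2$. This gives
$$[\tilde e_i^{\,n},\tilde e_j]=[\tilde e_i,\tilde e_j]^n=e.$$
Hence the elements $f_i=\tilde e_i^{\,n}$ commute pairwise. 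They generate an abelian subgroup $\widetilde{M}$ that maps onto $M=n\Z^r$.

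To show the restriction to $M$ is trivial, note that $f_1,\dots,f_r$ commute and map to a basis of $M\cong\Z^r$. The homomorphism $M\to\widetilde{N}$ sending the $i$-th basis element of $M$ to $f_i$ is therefore well defined, since $\Z^r$ is free abelian and the images commute. This homomorphism is a section of $q$ over $M$. By Proposition \ref{exercise}, the pulled-back extension over $M$ is then split, i.e.\ trivial in $H^2(M;\Z_n)$. Passing to the normal core of $M$ in $G$ gives a normal subgroup of the same kind, should normality be wanted, and the restriction of the class to it remains trivial.

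For virtual abelianness of $\widetilde{G}$, the image $s(M)$ is isomorphic to $\Z^r$ and hence abelian. The subgroup $s(M)\Z_n\cong M\times\Z_n$ is abelian because $\Z_n$ is central. Its index in $\widetilde{G}$ is $[G:M]$, which is finite.

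The only nontrivial point is the bimultiplicativity of commutators, namely $[xy,z]=[x,z][y,z]$ when the commutators are central. This is a standard identity, checked by direct expansion. An alternative route uses the Universal Coefficient Theorem \ref{UCT AT}: $\Ext(\Z^r,\Z_n)=0$ and $H_2(\Z^r)=\Lambda^2\Z^r$. Restricting along multiplication by $n$ multiplies $\Lambda^2$ by $n^2$, which kills $\Hom(\Lambda^2\Z^r,\Z_n)$. I would use this as a cross-check, but present the explicit section.
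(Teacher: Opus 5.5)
Your proof is correct. It arrives at the same subgroup as the paper, $M=nN\cong\Z^r$ (the image of multiplication by $n$), but by a different mechanism.

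\textbf{The paper's route.} The argument is cohomological. By the K\"unneth theorem, $H^2(\Z^r;\Z_n)$ is spanned by cup products of classes in $H^1(\Z^r;\Z_n)=\Hom(\Z^r,\Z_n)$. Multiplication by $n$ kills $H^1$, hence kills $H^2$. So the extension pulled back along multiplication by $n$ is the direct sum $\Z_n\oplus\Z^r$. The chain of finite-index inclusions $\Z^r\hookrightarrow\Z_n\oplus\Z^r\hookrightarrow\widetilde G'\hookrightarrow\widetilde G$ then gives virtual abelianness.

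\textbf{Your route.} You build the splitting by hand. Since $[\widetilde N,\widetilde N]\subseteq\Z_n$ is central, commutators are bimultiplicative, so $[\tilde e_i^{\,n},\tilde e_j]=[\tilde e_i,\tilde e_j]^n=e$. The $n$-th powers of the lifts therefore commute and define a homomorphic section over $n\Z^r$. Then $q^{-1}(M)=s(M)\Z_n\cong M\times\Z_n$ is abelian, of index $[G:N]\,n^r$ in $\widetilde G$.

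\textbf{Comparison.} Your argument is elementary and self-contained: it needs no K\"unneth theorem or cup products, and it exhibits the splitting and the finite-index abelian subgroup explicitly. The paper's argument is shorter given the machinery, and it is the conceptual reason your commutator computation works. Your UCT cross-check ($\Ext(\Z^r,\Z_n)=0$, and multiplication by $n$ acts as $n^2$ on $\Lambda^2\Z^r$) is essentially the paper's argument in homological form.

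\textbf{Cosmetic points.}
\begin{enumerate}
\item Name the section $s\colon M\to\widetilde N$ before you use $s(M)$. Note that the letter $s$ already denotes the quotient map $G\to D$ in Remark~\ref{virtually_abelian_general_form}.
\item The normal-core step is unnecessary. $nN$ is characteristic in $N$, and $N$ is normal in $G$, so $nN$ is already normal in $G$.
\end{enumerate}
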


\begin{proof}
Since $G$ is virtually abelian and finitely generated, Remark \ref{virtually_abelian_general_form} implies that it has a finite-index subgroup isomorphic to $\Z^r$. Let $\iota$ be as in (\ref{eqn:virtually_abelian_defn}). We have the following commutative diagram:
$$\begin{tikzcd}
e\arrow{r}&\Z_n\arrow{r}\arrow[d, "\id"]&\widetilde{G}'\arrow{r}\arrow{d}&\Z^r\arrow{r}\arrow[d,"\iota"]&e\\
e\arrow{r}&\Z_n\arrow{r}&\widetilde{G}\arrow{r}&G\arrow{r}&e.
\end{tikzcd}$$
By the K\"unneth Theorem~\cite[Theorem 3.16]{hatcher}, each element of $H^2(\Z^r;\Z_n)$ can be written as a linear combination of cup products of 1-cohomology classes. Since $H^1(\Z^r; \Z_n)$ is naturally isomorphic to $\Hom(\Z^r,\Z_n)$, we see that the map from $\Z^r$ to itself induced by multiplication by $n$ induces the zero map on the level of 1-cohomology, and thus induces the zero map on the level of 2-cohomology as well. It follows that we have the following commutative diagram
$$\begin{tikzcd}
e\arrow{r}&\Z_n\arrow{r}\arrow[d,"\id"]&\Z_n\oplus\Z^r\arrow{r}\arrow{d}&\Z^r\arrow{r}\arrow{d}{\bullet n}&e\\
e\arrow{r}&\Z_n\arrow{r}\arrow[d,"\id"]&\widetilde{G}'\arrow{r}\arrow{d}&\Z^r\arrow{r}\arrow[d,"\iota"]&e\\
e\arrow{r}&\Z_n\arrow{r}&\widetilde{G}\arrow{r}&G\arrow{r}&e.
\end{tikzcd}$$
Notice that we have the following list of finite-index inclusions:
$$\Z^r\hookrightarrow \Z_n\oplus\Z^r\hookrightarrow \widetilde{G}'\hookrightarrow \widetilde{G}\,.$$
It follows that $\widetilde{G}$ is a finitely generated, virtually abelian group. Finally, notice that the cohomology class of the extension, when restricted to the finite-index subgroup $\Z^r\leq G$ is trivial because the top row is a direct sum extension.
\end{proof}
We end the subsection by gathering known results about group $C^*$-algebras of finitely generated, virtually abelian groups.
\begin{remark}\label{remarks about C*-algebras of fg VA groups}
    Let $G$ be a finitely generated, virtually abelian group.
    \begin{enumerate}
        \item For $D$ and $r$ as in (\ref{eqn:virtually_abelian_defn}), $C^*(G)$ embeds into $M_{|D|}(C(\TT^r))$ (see \cite[Section 3.2]{Connective} for the construction of the embedding). Hence $C^*(G)$ is subhomogeneous.
        \item $\Prim(C^*(G))$ is not Hausdorff in general. An explicit example is the Dimension 3 Crystallographic group 90. For an explanation of why this occurs, we refer the reader to \cite[Proposition 4.5, Section 5 and Appendix]{CW24}. However, for each $k\in \N$, $\Prim_k(C^*(G))$ is totally normal, and thus Hausdorff (\cite[Proposition 4.10]{paper_untwisted_case}).
    \end{enumerate}
\end{remark}
\subsection{Nuclear Dimension}
The notion of the {\em nuclear dimension} was introduced by Winter and Zacharias in \cite{WZ10}. In that paper, they showed that $\dimnuc (C(X))=\dim(X)$ for every locally compact second countable Hausdorff space $X$. In this sense, nuclear dimension can be viewed as a non-commutative analogue of the covering dimension. 

We refer the reader to \cite{WZ10} for the precise definition and basic properties of nuclear dimension. \par
In this paper, we are interested in computing the nuclear dimension on the setting of subhomogeneous $C^*$-algebras. For such $C^*$-algebras, Winter has shown that it is connected with the dimensions of the spaces of $k$-dimensional irreducible representations.
\begin{theorem}[cf. Main Theorem, \cite{Win04}]\label{dim_nuc_subhomogeneous}
 Let $A$ be a separable subhomogeneous $C^*$-algebra. Then 
\[\dimnuc (A)=\max_{i\in \N}\{\dim \Prim_i(A)\}.\]
Here $\dim$ denotes the covering dimension.
 \end{theorem}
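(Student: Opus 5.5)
Since $A$ is subhomogeneous, I would first stratify it by irreducible dimension. Let $N$ be the maximal dimension of an irreducible representation of $A$. For $k\le N$, let $I_k$ be the ideal of elements killed by every irreducible representation of dimension $>k$. This gives a finite composition series $0=I_0\subseteq I_1\subseteq\dots\subseteq I_N=A$. The quotient $I_k/I_{k-1}$ is $k$-homogeneous, and its spectrum is $\Prim_k(A)$. By Fell's theorem, a $k$-homogeneous separable $C^*$-algebra is the section algebra $\Gamma_0(E_k)$ of a locally trivial $M_k$-bundle $E_k$ over the locally compact Hausdorff space $\Prim_k(A)$. In particular, each $\Prim_k(A)$ is a locally closed, Hausdorff, second countable subspace of $\Prim(A)$. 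This reduces everything to covering dimension of these strata.

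\emph{Lower bound.} Nuclear dimension does not increase under passing to ideals or to quotients (see \cite{WZ10}). So $\dimnuc(A)\ge\dimnuc(I_k/I_{k-1})$ for every $k$. Over an open set $U\subseteq\Prim_k(A)$ on which $E_k$ is trivial, $\Gamma_0(E_k|_U)\cong C_0(U)\otimes M_k$ is an ideal of $I_k/I_{k-1}$. Its nuclear dimension is $\dim U$, using $\dimnuc(C_0(U))=\dim U$ from \cite{WZ10} and the fact that tensoring with $M_k$ does not change nuclear dimension. Covering dimension of a locally compact second countable space is the supremum over such a locally finite trivializing cover. This gives $\dimnuc(A)\ge\max_k\dim\Prim_k(A)$.

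\emph{Upper bound.} This is the substantial direction. The naive extension estimate $\dimnuc(A)\le\dimnuc(I)+\dimnuc(A/I)+1$ loses $N-1$, so one has to glue across strata more carefully. Set $d=\max_k\dim\Prim_k(A)$. Given a finite set $F\subseteq A$ and $\varepsilon>0$, I would argue by downward induction on $k$, starting from the closed top stratum $\Prim_N(A)$.
\begin{enumerate}
\item On each stratum, choose a finite open cover of a compact piece of $\Prim_k(A)$ of order $\le d+1$, subordinate to trivializations of $E_k$ and to the $\varepsilon$-oscillation of the elements of $F$. Colour its members with $d+1$ colours so that same-coloured sets are disjoint.
\item This yields approximations of $A$ through finite-dimensional algebras $\bigoplus M_k$ (one summand per point-evaluation), with c.p.c.\ order zero maps back on each colour. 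The maps back are built from a partition of unity on the stratum, lifted to $A$ via the quotient map onto $I_k/I_{k-1}$.
\item Extend these maps to a neighbourhood of the stratum in $\Prim(A)$, using that order zero maps out of finite-dimensional algebras are projective (liftable cones). Irreducible representations near a point of $\Prim_k(A)$ with $k$ smaller restrict, up to small error, to subrepresentations of nearby higher-dimensional ones. This lets the open sets of the lower strata be absorbed into the existing colour classes rather than given new colours.
\end{enumerate}

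I expect step 3, the gluing between strata in the non-Hausdorff space $\Prim(A)$, to be the main obstacle. The key point to establish is that the union of the strata admits refinements of order $\le d+1$ compatible with the closure relations (``decomposition dimension'' equals the maximum over strata). I would try to obtain this from a Tietze-type extension of partitions of unity from the closed set $\Prim_{\ge k}(A)$ to its neighbourhood, combined with the countable sum theorem for covering dimension. Once this is in place, the order zero maps glue colourwise and give $\dimnuc(A)\le d$, which matches the lower bound.
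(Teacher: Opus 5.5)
The paper does not prove this statement. It imports it from Winter's computation of the decomposition rank of separable subhomogeneous algebras \cite{Win04}, with $\dimnuc\le\mathrm{dr}$ giving the upper bound. Your argument therefore has to stand on its own, and it has two problems.

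\textbf{The composition series is upside down.} The set $\Prim_{\le k}(A)$ of irreducibles of dimension at most $k$ is \emph{closed}, since anything in its closure still satisfies the polynomial identities of $M_k$. So $\Prim_{>k}(A)$ is open and usually not closed. The hull of your $I_k=\bigcap_{\dim\pi>k}\ker\pi$ is $\overline{\Prim_{>k}(A)}$, which can contain lower-dimensional points. For $A=\{f\in C([0,1],M_2): f(0)\ \text{diagonal}\}$, both characters at $0$ are limits of the $\mathrm{ev}_t$. Hence $I_1=0$, and $I_2/I_1=A$ is not homogeneous. The correct series is $J_k=\bigcap_{\dim\pi<k}\ker\pi$, decreasing from $J_1=A$ to $J_{N+1}=0$. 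Here $\Prim(J_k)=\Prim_{\ge k}(A)$, and $J_k/J_{k+1}$ is $k$-homogeneous with spectrum $\Prim_k(A)$. With that correction your lower-bound argument goes through.

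\textbf{The upper bound is not proved.} The reversal already breaks your scheme. $\Prim_N(A)$ is open, not closed, and it is the higher-dimensional irreducibles that accumulate on the lower strata: an $l$-dimensional irreducible close to $x\in\Prim_k(A)$, with $l>k$, is approximately unitarily equivalent on $F$ to $\pi_x\oplus\rho$. So you must start from the closed bottom stratum and push into the higher ones, not absorb lower strata into a neighbourhood of the top one.

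Even after reorienting, step 3 only restates what must be shown. Lifting order zero maps from $J_k/J_{k+1}$ to $J_k$ via projectivity of cones controls the lifts only at points of $\Prim_k(A)$. On $\Prim_{>k}(A)$ they are arbitrary modulo $J_{k+1}$. You still have to cut them off, make them orthogonal to the same-coloured maps built on the higher strata, and keep $\psi\varphi\approx\id$ on $F$ at every point simultaneously.

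This cannot be done with partitions of unity on $\Prim(A)$ alone. By Dauns--Hofmann, central multipliers are continuous functions on the non-Hausdorff space $\Prim(A)$. In the example above, such a function must take the same value at both characters at $0$ as its limit along $\mathrm{ev}_t$.

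Producing $(d+1)$-coloured data compatible with the closure relations, and gluing the order zero maps without spending new colours at each stratum, is precisely the content of Winter's theorem. Your proposal names this as the obstacle but leaves it open. As written, you have established only $\dimnuc(A)\ge\max_k\dim\Prim_k(A)$. For the reverse inequality you need Winter's equality $\mathrm{dr}(A)=\max_k\dim\Prim_k(A)$ together with $\dimnuc(A)\le\mathrm{dr}(A)$.
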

We remark that the statement of the Main Theorem in \cite{Win04} is slightly different than presented here. For the exact statement, see \cite[Thm. 2.7]{paper_untwisted_case}). \par
It is already known that $\dimnuc (C^*(G, \sigma))\leq h(G)$ when $G$ is a finitely generated virtually abelian group and $[\sigma]$ is type I \cite[Prop. 2.14]{BL24}.\footnote{ The result is stated in terms of the asymptotic dimension, $\text{asdim}\,(G)$. However, $\text{asdim}\,(G)=h(G)$ for every finitely generated virtually abelian group $G$ by \cite[Thm. 3.5]{asymp_dim_va_groups}. Moreover, we refer the reader to Definition \ref{type I cocycle definition} for the definition of type I cohomology class.} Our main result will show that equality holds, and thus gives an affirmative answer to \cite[Question 2.15]{BL24}.
\section{A characterization of rational cohomology classes on virtually abelian groups}
\begin{definition}\label{rational cocycle definition}
A cohomology class $[\sigma]\in H^2(G;\TT)$ is {\em rational} if it has a representative $\sigma \colon G\times G\to \TT$ such that, for every $x,y\in G$, there exists a rational number $\theta=\theta(x,y)$, such that $\sigma(x,y)=e^{2\pi i\theta(x,y)}$. Otherwise, we say that $[\sigma]$ is \emph{irrational}.
\end{definition}
\begin{example}\label{rotation algebras type of cocycle}
    Let $G=\Z^2$, and $\theta\in (0,1)$. Define a 2-cocycle $\sigma$ via $\sigma \colon \Z^2\times \Z^2\to \TT$ given by $\sigma_{\theta}((x_1,x_2),(y_1,y_2))=e^{2\pi i\theta x_2y_1}$. Notice that $[\sigma_{\theta}]$ is rational if and only if $\theta\in \Q$. Moreover, it is known that $C^*(G,\sigma_{\theta})\cong A_{\theta}$, where $A_{\theta}$ is the rotation algebra. \par
    It is known that if $\theta=\frac{p}{q}\in \Q$, where gcd$(p,q)=1$, then $A_{\theta}\hookrightarrow M_q(C(S^2))$ (\cite[Theorem 1.2]{rational_rotation_algebras_paper}), and hence it is a subhomogeneous $C^*$-algebra. Moreover it has nuclear dimension 2 by \cite[Example 6.1]{WZ10}. \par
    On the other hand, if $\theta\notin \Q$, then $A_{\theta}$ is a simple, $A\TT$-algebra by \cite[Theorem 4]{irrational_rotation_algebras_are_AT}. Hence it is not subhomogeneous. Moreover it has nuclear dimension 1 by \cite[Example 6.1]{WZ10}.
\end{example}

\begin{lemma}\label{rational implies n-th root}
Let $G$ be a group such that $H_2(G;\Z)$ is finitely generated, and let $[\sigma]\in H^2(G;\TT)$ be rational. Then there exists some $n\in\N$ and some representative $\sigma'\in[\sigma]$ so that $\sigma'(x,y)$ is an $n$th root of unity for all $x,y\in G$.
\end{lemma}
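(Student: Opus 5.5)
The plan is to pass from $\TT$-valued cocycles to $\Q/\Z$-valued cocycles, and then use the Universal Coefficient Theorem (Theorem~\ref{UCT AT}) twice, together with the finite generation of $H_2(G;\Z)$, to shrink the coefficient group down to a finite cyclic group $\Z_n$.

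First, let $\sigma$ be a representative of $[\sigma]$ as in Definition~\ref{rational cocycle definition}. Then $\sigma$ takes values in the subgroup of roots of unity of $\TT$. Let $\iota\colon\Q/\Z\to\TT$ be the injective homomorphism $t\mapsto e^{2\pi i t}$. Since $\iota$ is injective, $\sigma=\iota\circ\tau$ for a unique function $\tau\colon G\times G\to\Q/\Z$, and $\tau$ is again a (normalized) cocycle. So $[\sigma]=\iota_*[\tau]$ with $[\tau]\in H^2(G;\Q/\Z)$.

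Next, apply Theorem~\ref{UCT AT} with $A=\Q/\Z$. Because $\Q/\Z$ is divisible, it is an injective $\Z$-module, so $\Ext(H_1(G;\Z),\Q/\Z)=0$. Hence
\[
\kappa\colon H^2(G;\Q/\Z)\to\Hom(H_2(G;\Z),\Q/\Z)
\]
is an isomorphism. Put $f=\kappa([\tau])$. Since $H_2(G;\Z)$ is finitely generated, the image of $f$ is a finitely generated torsion subgroup of $\Q/\Z$, hence finite. Taking $n$ to be the lcm of the orders of the images of a finite generating set, the image of $f$ lies in the copy $j(\Z_n)=\frac1n\Z/\Z$. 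So $f=j\circ f_0$ for some $f_0\in\Hom(H_2(G;\Z),\Z_n)$.

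Now apply Theorem~\ref{UCT AT} with $A=\Z_n$. Surjectivity of $\kappa$ there gives a class $[\rho]\in H^2(G;\Z_n)$ with $\kappa([\rho])=f_0$. By naturality of $\kappa$ in the coefficient group, $\kappa(j_*[\rho])=j\circ f_0=f=\kappa([\tau])$. Injectivity of $\kappa$ for $\Q/\Z$ then gives $j_*[\rho]=[\tau]$. Therefore
\[
[\sigma]=\iota_*j_*[\rho]=[\iota\circ j\circ\rho],
\]
and $\sigma':=\iota\circ j\circ\rho$ takes values in the $n$th roots of unity. As noted in the preliminaries, we may take $\rho$ normalized.

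I expect the main step needing care to be the vanishing of $\Ext(H_1(G;\Z),\Q/\Z)$, since it is exactly what makes the class $[\tau]$ determined by its Kronecker pairing. Note that $H_1(G;\Z)$ need not be finitely generated here, so this step must rely on injectivity of $\Q/\Z$ rather than on any finiteness of $H_1$. The rest is bookkeeping with the naturality statement in Theorem~\ref{UCT AT}.
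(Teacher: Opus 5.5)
Your proposal is correct and follows the paper's proof step for step. Like the paper, you view $\sigma$ as a $\Q/\Z$-valued cocycle, use divisibility of $\Q/\Z$ to make $\kappa$ an isomorphism, use finite generation of $H_2(G;\Z)$ to land in a copy of $\Z_n$, lift via surjectivity of $\kappa$ with $\Z_n$-coefficients, and conclude by naturality and injectivity; you are only slightly more explicit about pushing the class back from $\Q/\Z$ to $\TT$.
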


\begin{proof}
     Note that we may abuse notation slightly to consider $\sigma$ to be a cocycle with values in $\Q/\Z$. The universal coefficient theorem (Theorem \ref{UCT AT}) yields the following short exact sequence

\begin{tikzcd}[column sep=small]
0 \arrow[r] &
\Ext(H_1(G; \mathbb Z);\mathbb Q/\mathbb Z) \arrow[r] &
H^2(G; \mathbb Q/\mathbb Z) \arrow[r,"\kappa"] &
\Hom(H_2(G; \mathbb Z); \mathbb Q/\mathbb Z) \arrow[r] &
0\,.
\end{tikzcd}

Since $\Q/\Z$ is a divisible group, $\Ext(H_1(G; \Z),\Q/\Z)=\{0\}$, so $\kappa$ is an isomorphism. Since $H_2(G;\Z)$ is finitely generated, it follows that the $\kappa([\sigma])(H_2(G,\Z))$ is in some finitely generated subgroup of $\Q/\Z$, which must be isomorphic to $\Z_n$ for some $n$. By the naturality of the sequence in the universal coefficient theorem, the inclusion $\Z_n\hookrightarrow \Q/\Z$ induces a commutative diagram:

\[
\begin{tikzcd}[column sep=2.2em, row sep=2em]
0 \arrow{r}
  & {\scriptsize \Ext(H_1(G;\Z),\Z_n)} \arrow{r} \arrow{d}
  & {\scriptsize H^2(G;\Z_n)} \arrow[r, "\widetilde{\kappa}"] \arrow[d,"\beta"]
  & {\scriptsize \Hom(H_2(G;\Z),\Z_n)} \arrow[d, "\gamma"] \arrow{r}
  & 0 \\
0 \arrow{r}
  & 0 \arrow{r}
  & {\scriptsize H^2(G;\Q/\Z)} \arrow{r}{\kappa}
  & {\scriptsize \Hom(H_2(G;\Z),\Q/\Z)} \arrow{r}
  & 0\,.
\end{tikzcd}
\]

Recall that the image of the homomorphism $\kappa([\sigma])$ is inside $\Z_n$. It follows that 
\begin{equation}\label{100}
 \kappa([\sigma])=\gamma([\tau])\text{ for some }\tau\in \Hom(H_2(G;\Z),\Z_n).   
\end{equation}
 Moreover, $\widetilde{\kappa}$ is surjective, and thus
 \begin{equation}\label{101}
  [\tau]=\widetilde{\kappa}([\sigma'])\,.   
 \end{equation}
 Commutativity of the diagram, together with (\ref{100}) and (\ref{101}), imply that $$\kappa([\sigma])=(\kappa \circ \beta)([\sigma'])\,.$$
 Finally, injective of $\kappa$ implies that $[\sigma]=\beta[\sigma']$.

\end{proof}

If $G$ is a finitely presented group, then $H_2(G;\Z)$ is finitely generated~\cite[page 197]{coho}. The assumption that $H_2(G;\Z)$ is finitely generated is necessary. The following example shows that
assuming instead that the group itself is finitely generated is not sufficient.

\begin{example}
Let $G=\Z\wr\Z$. We may represent any element of $G$ as a pair $(v,n)$ with $v\in G_0:=\bigoplus_{k=-\infty}^\infty\Z$ and $n\in\Z$. Letting $T$ be the bilateral shift on $G_0$, we may write the formula for multiplication on $G$ as
$$(v_1,n_1)\cdot(v_2,n_2)=(v_1+T^{n_1}v_2,n_1+n_2)\,.$$
Define $\sigma_0 \colon G_0\times G_0\rightarrow\Q$ by the formula
$$\sigma_0(v_1,v_2)=\sum_{k=1}^\infty\frac1k\langle v_1,T^kv_2\rangle\,.$$
This sum is actually finite for any fixed $v_1$ and $v_2$ because they have finite support. Clearly $\sigma_0$ is bilinear and also has the property that $\sigma_0(Tv_1,Tv_2)=\sigma_0(v_1,v_2)$. We define $\sigma \colon G\times G\rightarrow\Q$ as
$$\sigma((v_1,n_1),(v_2,n_2))=\sigma_0(v_1,T^{n_1}v_2)\,.$$
Then
\begin{align*}
\partial\sigma((v_1,n_1),(v_2,n_2),(v_3,n_3))=&\sigma_0(v_1,T^{n_1}v_2)-\sigma_0(v_1,T^{n_1}v_2+T^{n_1+n_2}v_3)\\
&+\sigma_0(v_1+T^{n_1}v_2,T^{n_1+n_2}v_3)-\sigma_0(v_2,T^{n_2}v_3)\\
=&\sigma_0(T^{n_1}v_2,T^{n_1+n_2}v_3)-\sigma_0(v_2,T^{n_2}v_3)\\
=&0\,.
\end{align*}
Set $\alpha:=\exp(2\pi i\sigma)$, and notice that $\alpha$ is 2-cocycle on $G$ that takes values in roots of unity. Let $\{e_n,n\in \Z\}$ be the canonical basis of $G_0$. For every $\ell,j\in \Z$ with $\ell>j$ we have
\begin{equation}\label{12}
    \sigma_0(e_{\ell},e_j)=\sum_{k=1}^\infty\frac1k\langle e_{\ell},T^ke_j\rangle=\frac{1}{\ell-j}\langle e_{\ell},e_{\ell} \rangle=\frac{1}{\ell-j}  \hspace{6mm} \text{and}
\end{equation}
\begin{equation}\label{13}
\sigma_0(e_j,e_{\ell})=\sum_{k=1}^\infty\frac1k\langle e_j,T^ke_{\ell}\rangle=0\,.    
\end{equation}
Hence, by the definition of the Kronecker pairing (Definition \ref{kronecker pairing}), (\ref{12}), and (\ref{13}), we deduce that
\begin{equation}\label{14}
  \left\langle\alpha,[e_{\ell}|e_j]-[e_j|e_{\ell}]\right\rangle=\exp(2\pi i(\sigma_0(e_{\ell},e_j)-\sigma_0(e_j,e_{\ell})=\exp(\frac{2\pi i}{\ell-j}) \,.
\end{equation}
Hence $\kappa([\alpha])$ has order at least $\ell-j$. Recall that $\kappa\colon H^2(G;\TT)\rightarrow \Hom(H_2(G;\Z),\TT)$ is the map that shows on the Universal Coefficient Theorem (Theorem \ref{UCT AT}). Thus $[\alpha]\in H^2(G,\TT)$ has order at least $\ell-j$. But $\ell$ and $j$ are arbitrary (with the only restriction that $\ell>j$). Hence $[\alpha]$ has infinite order, and thus there is no $n\in \N$ such that $\alpha(x,y)$ is an $n$th root of unity for every $x,y\in G$.

\end{example}

The following definition actually follows from a characterization of a type I $C^*$-algebra, due to Holzner~\cite{Ho1981} and independently  Kleppner (\cite{kleppner_type_I_cocycles}), that is applied to the case when the group is finitely generated, virtually abelian.\footnote{Note that these authors use a different definition of ``normalized'' than we do. See~\cite{KleppnerOlder} for an explanation of their notation.}
\begin{definition}[Theorem 2, \cite{kleppner_type_I_cocycles}]\label{type I cocycle definition} Let $G$ be a finitely generated, virtually abelian group, and $[\sigma]\in H^2(G;\Z)$. We say that $[\sigma]$ is \emph{type I} if there exists an (abelian) subgroup $N$ of $G$ with finite index such that $[\sigma]$ is trivial in $N$.
    
\end{definition}
Notice that by the proof of \cite[Prop. 2.14]{BL24} we can take $N$ to be normal in $G$. \par
We need the following Lemma.

\begin{lemma}\label{type I implies torsion}
Let $G$ be a discrete group and $N\le G$ be of finite index. If $[\sigma]\in H^2(G;A)$ and the restriction of $[\sigma]$ to $N$ is trivial, then $[\sigma]$ is $[G:N]$-torsion.
\end{lemma}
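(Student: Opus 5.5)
The plan is to use the restriction and corestriction (transfer) maps in group cohomology, together with the standard fact that $\mathrm{cor}\circ\mathrm{res}$ is multiplication by the index. Let $m=[G:N]$. There is a transfer $\mathrm{cor}\colon H^2(N;A)\to H^2(G;A)$ satisfying
$$\mathrm{cor}\circ\mathrm{res}=m\cdot\id_{H^2(G;A)}\,,$$
with $A$ a trivial $G$-module (see \cite{coho}, Chapter III). Granting this, the proof is one line. By hypothesis $\mathrm{res}[\sigma]=0$ in $H^2(N;A)$, so
$$m[\sigma]=\mathrm{cor}(\mathrm{res}[\sigma])=\mathrm{cor}(0)=0\,,$$
and hence $[\sigma]$ is $[G:N]$-torsion.

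If we want to avoid quoting the transfer and stay with the explicit cocycle language of the paper, the plan is to build the transfer by hand at the cochain level.
\begin{enumerate}
\item Choose a set of right coset representatives $\{t_1,\dots,t_m\}$ for $N\backslash G$. For $t$ a representative and $g\in G$, write $\overline{tg}$ for the representative of $Ntg$.
\item Since $\mathrm{res}[\sigma]=0$, choose $\gamma\colon N\to A$ with $\sigma(a,b)=\gamma(a)-\gamma(ab)+\gamma(b)$ for $a,b\in N$.
\item Define $\beta\colon G\to A$ by
$$\beta(g)=\sum_{i=1}^m\Big(\gamma\big(t_i g\,\overline{t_ig}^{-1}\big)+\text{correction terms involving }\sigma\Big)\,.$$
The correction terms are built from the cocycle identity relating $\sigma(t_i,g)$, $\sigma\big(t_ig\,\overline{t_ig}^{-1},\overline{t_ig}\big)$, and so on.
\item Verify that $m\,\sigma=\partial\beta$.
\end{enumerate}

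The main obstacle is this bookkeeping: obtaining the exact correction terms so that the sum telescopes to $m\,\sigma(g,h)$. I expect this to reduce to summing the cocycle identity over $i$, using that $i\mapsto \overline{t_ig}$ is a permutation of the coset representatives.

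An alternative, closer in spirit to the paper, is to argue with central extensions via Proposition~\ref{exercise}. Triviality on $N$ gives a splitting homomorphism $N\to\widetilde G$. One could then try to show that the pushforward by multiplication by $m$ on $A$ splits, using a norm-type averaging over cosets. I expect this to amount to the same transfer computation, so I would present the argument via the standard identity $\mathrm{cor}\circ\mathrm{res}=[G:N]$ and cite it.
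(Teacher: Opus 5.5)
Your proposal is correct and is exactly the paper's proof: the paper cites the corestriction $\mathrm{cor}_N^G\colon H^2(N;A)\to H^2(G;A)$ from \cite[III Proposition 9.5 (ii)]{coho}, uses that $\mathrm{cor}_N^G\circ\mathrm{res}_N^G$ is multiplication by $[G:N]$, and applies it to a class whose restriction to $N$ vanishes. Your hand-built cochain-level transfer is unnecessary, and its correction terms are left unspecified, so presenting the one-line argument via the cited identity, as you intend, is the right choice.
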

\begin{proof}
Let $\mathrm{res}_N^G:H^2(G;A)\rightarrow H^2(N;A)$ denote the restriction map. By \cite[III Proposition 9.5 (ii)]{coho} there is a map $\mathrm{cor}_N^G:H^2(N;A)\rightarrow H^2(G;A)$ so that $\mathrm{cor}_N^G\circ\mathrm{res}_N^G$ is multiplication by $[G:N]$. Thus $[G:M]\cdot[\sigma]=\mathrm{cor}_N^G\circ\mathrm{res}_N^G([\sigma])=0$.
\end{proof}

\begin{lemma}\label{type I implies root of unity}
Let $G$ be a discrete group and $\sigma$ be $n$-torsion.
Then the cohomology class $[\sigma] \in H^2(G;\TT)$ is represented by a
$2$-cocycle whose values are roots of unity. In particular, $[\sigma]$ is rational.
\end{lemma}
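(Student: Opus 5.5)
The idea is to take an $n$-th root of $\sigma$ in a way that is compatible with the cocycle identity. Suppose $n[\sigma]=0$ in $H^2(G;\TT)$, so that $\sigma^n=\partial\gamma$ for some function $\gamma\colon G\to\TT$. In multiplicative notation this reads
$$\sigma(g,h)^n=\gamma(g)\gamma(gh)^{-1}\gamma(h).$$
Since $\TT$ is divisible, we can choose (set-theoretically) a function $\beta\colon G\to\TT$ with $\beta(g)^n=\gamma(g)$ for all $g$, and $\beta(e)=1$. Then we replace $\sigma$ by the cohomologous cocycle
$$\sigma'(g,h):=\sigma(g,h)\,\beta(g)^{-1}\beta(gh)\beta(h)^{-1}.$$
This differs from $\sigma$ by a coboundary, so $[\sigma']=[\sigma]$, and $\sigma'$ is still normalized.

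Next we check that $\sigma'$ takes values in roots of unity. Raising to the $n$-th power gives
$$\sigma'(g,h)^n=\sigma(g,h)^n\,\gamma(g)^{-1}\gamma(gh)\gamma(h)^{-1}=1.$$
So every value of $\sigma'$ is an $n$-th root of unity, i.e. of the form $e^{2\pi i k/n}$ with $k\in\Z$. This gives rationality in the sense of Definition \ref{rational cocycle definition}, in fact with a uniform denominator $n$.

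The only care needed is the sign convention. The paper writes coboundaries additively as $\gamma(g)-\gamma(gh)+\gamma(h)$, so one must make sure the correction $\beta$ is applied with the orientation that cancels $\gamma$. No continuity or measurability of $\beta$ is required, since $G$ is discrete. I do not expect any real obstacle; the only substantive input is the divisibility of $\TT$, which is what allows the $n$-th root $\beta$ of $\gamma$ to exist.

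If this is combined with Lemma \ref{type I implies torsion}, it also shows that type I classes (Definition \ref{type I cocycle definition}) are rational.
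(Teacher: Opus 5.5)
Your argument is correct and is essentially the paper's proof. The paper appeals to exactness of $H^2(G;\mu_n)\to H^2(G;\TT)\xrightarrow{\cdot n}H^2(G;\TT)$ in the long exact sequence of the coefficient sequence $e\to\mu_n\to\TT\xrightarrow{\zeta\mapsto\zeta^n}\TT\to e$; your step of taking a pointwise $n$-th root $\beta$ of $\gamma$ and correcting $\sigma$ by $\partial\beta$ is the cochain-level diagram chase behind that exactness, and your choice $\beta(e)=1$ is legitimate because $\gamma(e)=\sigma(e,e)^n=1$ for normalized $\sigma$.
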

\begin{proof}
Let $\mu_n$ be the group of $n$th roots of unity in $\TT$.
Consider the short exact sequence of coefficients $e \to \mu_n \to \TT \xrightarrow{\zeta \mapsto \zeta^n} \TT \to e$. This induces a long exact sequence in cohomology:
$$
\begin{tikzcd}
\cdots \arrow[r] & {H^2(G; \mu_n)} \arrow[r, "i_{\ast}"] & {H^2(G;\TT)} \arrow[r, "\cdot n"] & {H^2(G;\TT )}\arrow[r] &\cdots
\end{tikzcd}
$$
by \cite[Section 3.E]{hatcher}.

Since $n[\sigma] = 0$, the class $[\sigma]$ lies in $\ker(\cdot n)$, which by exactness is $\mathrm{im}(i_*)$. Thus, $[\sigma]$ can be represented by a 2-cocycle $\sigma'$ taking values in the $n$-th roots of unity $\mu_n \subset \TT$. The result holds as $[\sigma] = [\sigma']$ in $H^2(G; \TT)$.
\end{proof}

It turns out that finite-dimensional representations exist only when the cohomology class is rational. This is probably well known, but we provide a proof below.

\begin{proposition}\label{fd=>rational}
Let $G$ be a discrete group. Suppose that $\rho \colon G\rightarrow \U(n)$ is a projective representation with associated cohomology class $[\omega]\in H^2(G;\TT)$. Then $[\omega]$ can be represented by a cocycle that takes values in the $n$th roots of unity.
\end{proposition}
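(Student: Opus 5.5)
The plan is to use Proposition \ref{cocycle_pullback} together with the commutative diagram of Remark \ref{PU(N)extension}(3), which compares the extension $\Z_n\to\SU(n)\to\PU(n)$ with $\TT\to\U(n)\to\PU(n)$.

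\emph{Step 1.} By Proposition \ref{cocycle_pullback}, $[\omega]$ equals $\hat\rho^*([\eta])$. Here $\hat\rho\colon G\to\PU(n)$ is the induced homomorphism and $[\eta]\in H^2(\PU(n);\TT)$ is the class of the bottom extension $\TT\to\U(n)\to\PU(n)$.

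\emph{Step 2.} The diagram in Remark \ref{PU(N)extension}(3) has identity on $\PU(n)$ and the inclusion $j\colon\Z_n\hookrightarrow\TT$ on kernels. By Proposition \ref{exercise}, the bottom row is the push-forward of the top row along $j$, so $[\eta]=j_*([\xi])$, where $[\xi]\in H^2(\PU(n);\Z_n)$ is the class of $\Z_n\to\SU(n)\to\PU(n)$.

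\emph{Step 3.} Pullback and push-forward commute, since both are composition of cocycles on different sides. Therefore
\[
[\omega]=\hat\rho^*j_*[\xi]=j_*\hat\rho^*[\xi].
\]
The class $\hat\rho^*[\xi]$ is represented by $\xi\circ(\hat\rho\times\hat\rho)$, which takes values in $\Z_n$. Pushing it into $\TT$ gives a cocycle with values in the $n$th roots of unity that represents $[\omega]$.

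An equivalent concrete route avoids Step 2. Choose a set-theoretic section $\theta\colon\PU(n)\to\SU(n)$; every element of $\PU(n)$ lifts to $\SU(n)$ because any unitary can be rescaled by an $n$th root of its determinant. Replace $\rho$ by the equivalent projective representation $\psi=\theta\circ\hat\rho$. Then $\psi(g)\psi(h)\psi(gh)^{-1}$ is a scalar matrix of determinant $1$, hence an $n$th root of unity. Equivalent projective representations have cohomologous cocycles, which finishes the argument.

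The only point needing care is this determinant check, or equivalently the identification of the bottom extension as a push-forward. It is routine, so there is no serious obstacle. Choosing $\theta(e)=1$ also makes the resulting cocycle normalized.
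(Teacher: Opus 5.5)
Your proposal is correct and is essentially the paper's proof. You write $[\omega]=\hat\rho^*$ of the class of $\TT\to\U(n)\to\PU(n)$ via Proposition~\ref{cocycle_pullback}, identify that class as the push-forward along $\Z_n\hookrightarrow\TT$ of the class of $\Z_n\to\SU(n)\to\PU(n)$ via Proposition~\ref{exercise}, and conclude. Your Step~3, that pullback and push-forward commute at the cocycle level, spells out a step the paper leaves implicit, and your determinant argument with a section $\PU(n)\to\SU(n)$ is a concrete unpacking of the same proof.
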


\begin{proof}
Let $\widehat{\rho}\colon G\rightarrow \PU(n)$ be the associated genuine homomorphism. Then by Proposition~\ref{cocycle_pullback}, $[\omega]=\widehat{\rho}^*(\alpha)$ where $\alpha$ is the cohomology class associated to the central extension:
$$\begin{tikzcd}
e\arrow{r}&\TT\arrow{r}&\U(n)\arrow{r}&\PU(n)\arrow{r}&e.
\end{tikzcd}$$
From Remark \ref{PU(N)extension}, we have the following commutative diagram.
$$\begin{tikzcd}
e\arrow{r}&\Z_n\arrow{r}\arrow[d,"\iota"]&\SU(n)\arrow{d}\arrow{r}&\PU(n)\arrow{r}\arrow{d}&e\\
e\arrow{r}&\TT\arrow{r}&\U(n)\arrow{r}&\PU(n)\arrow{r}&e.
\end{tikzcd}$$
From Proposition \ref{exercise}, we deduce that $[\alpha]=\iota_*([\eta])$, where $\eta$ is the 2-cocycle corresponding to the top row of the diagram. \par
Hence, as a push-forward of a cohomology class with coefficients in $\Z_n$, it follows that $[\alpha]$ can be expressed by a cocycle with representatives in $n$th roots of unity. It follows that $[\omega]$ can be represented by a cocycle that takes values in the $n$th roots of unity.
\end{proof}

\begin{remark}
The converse of Proposition~\ref{fd=>rational} is not true for all groups. A class of counterexamples comes from quotients of so-called {\em Deligne-type groups}. These are finitely generated groups with a central 2-torsion element $d$ in the intersection of all finite-index subgroups. For examples of Deligne-type groups see~\cite[section 5]{Deligne,hillnonrf, RFstover,nonaproximable}. Let $G$ be a Deligne-type group $G$, and $d$ as above. Let also $\widetilde{G}=G/\langle d\rangle$, and $[\sigma]\in H^2(\widetilde{G}, \Z_2)$ be the cohomology class associated to the central extension 
$$\begin{tikzcd}
e\arrow{r}&\Z_2\arrow{r}&G\arrow{r}&\widetilde{G}\arrow{r}&e.
\end{tikzcd}$$
Let $\iota:\Z_2\hookrightarrow \TT$ the natural embedding and $[\widetilde{\sigma}]=\iota_*([\sigma])\in H^2(\widetilde{G},\TT)$ the push-forward. Notice that there is a representative $\widetilde{\sigma}$ that takes values in $\{1,-1\}\subset \TT$. We will show that $C^*(\widetilde{G},\widetilde{\sigma})$ does not admit any finite-dimensional representation. Assume, for the sake of contraction, that it has one. Then, by Remark~\ref{correspondence}, there exists a finite-dimensional representation $\pi:G\to U(k)$ such that $\pi(d)=-1$. Notice that because $G$ is finitely generated, $\pi(G)$ is a finitely generated linear group, and thus residually finite by Mal'cev's theorem (see for example \cite[Theorem 6.4.13]{brownozawa}). Hence, there exists a finite group $F$ and a group homomorphism $\rho:\pi(G)\to F$ such that $\rho(-1)\neq e$. Thus $(\rho\circ \pi)(d)\neq e$. However, this contradicts our assumption that $d$ is in the intersection of all finite-index subgroups of $G$.

\end{remark}

We combine the above on the following Theorem.
\begin{theorem}\label{characterization of rational cocycle}

    Let $G$ be a finitely generated, virtually abelian group and $[\sigma]\in H^2(G;\TT)$. The following are equivalent.
    \begin{enumerate}[label=\roman*.]
        \item $[\sigma]$ is rational.
        \item There exists $n\in \N$ and a representative $\sigma$ which takes values in $n$-th roots of unity.
        \item $[\sigma]$ is type I.
        \item $[\sigma]$ is a torsion element in $H^2(G; \TT)$.
        \item $C^*(G,\sigma)$ is subhomogeneous.
        \item $C^*(G,\sigma)$ admits a finite-dimensional representation.
    \end{enumerate}
    \begin{proof}
    $(i)\Leftrightarrow(ii)$ Recall that finitely generated, virtually abelian groups are finitely presented. Hence $H_2(G;\Z)$ is finitely generated (see discussion after Lemma \ref{rational implies n-th root}). So, one direction follows from Lemma \ref{rational implies n-th root}. The other direction is immediate. \par
    $(ii)\Rightarrow (iii)$ It follows from Lemma \ref{extVA}. \par
    \par
    $(iii)\Rightarrow(iv)$ It follows from Lemma~\ref{type I implies torsion}\par
    $(iv)\Rightarrow (ii)$ It follows from Lemma \ref{type I implies root of unity}. \par
    $(ii)\Rightarrow (v)$ By Remark \ref{correspondence} and Lemma \ref{extVA}, it follows that there exists a virtually abelian group $\widetilde{G}$, such that every irreducible representation of $C^*(G,\sigma)$ has the same dimension as an irreducible representation in $C^*(\widetilde{G})$. But $C^*(\widetilde{G})$ is subhomogeneous by Remark \ref{remarks about C*-algebras of fg VA groups}. It follows that every irreducible representation of $C^*(G,\sigma)$ is uniformly bounded in dimension, and hence $C^*(G,\sigma)$ is subhomogeneous. \par
    $(v)\Rightarrow (vi)$. It is immediate. \par
    $(vi)\Rightarrow (ii)$. It follows from Proposition \ref{fd=>rational}.

We note that $(v)\Rightarrow(iii)$ follows from~\cite{Ho1981} or~\cite{kleppner_type_I_cocycles}.
    \end{proof}

\section{Nuclear dimension of twisted group $C^*$-algebras when the cohomology class is rational.}
\end{theorem}
The main goal of this Section is to compute the nuclear dimension of the twisted group $C^*$-algebra of a finitely generated and virtually abelian group with a rational cohomology class. Because these $C^*$-algebras are subhomogeneous by Theorem \ref{characterization of rational cocycle}, we will be using Theorem \ref{dim_nuc_subhomogeneous}. Hence, we need to describe finite dimensional, irreducible representations. However, Remark~\ref{correspondence} allows us to study the untwisted case. However, we need to be aware of the following:
\begin{enumerate}
    \item The group that we need to study must arise as a central extension of a finite cyclic group by a finitely generated, virtually abelian group. 
    \item In the proof of the untwisted case in \cite{paper_untwisted_case}, an important step was to find $m\in \N$ such that $\dim \Prim_m(C^*(G))=h(G)$. Here, we need to strengthen that, and find $s\in \N$, which might be different from $m$ used in \cite{paper_untwisted_case}, such that \textbf{a particular subset of} $\Prim_{s}(C^*(G))$, defined in Remark~\ref{correspondence}, has covering dimension equal to $h(G)$. This subset will be explicitly defined in (\ref{11}).
\end{enumerate}
We are now ready to set the stage. \par

Let $G$ be a finitely generated, virtually abelian group. Assume that there exists $a\in \mathcal{Z}(G)$ of order $n$.\par

Because $G$ is finitely generated, virtually abelian, it has a normal subgroup, let $N$, that is isomorphic to $\Z^r$ for $r=h(G)$.\footnote{See also Subsection 2.4.}  Because $\Z_n$ is central in $G$, it follows that $\Z^r\oplus\Z_n\cong\Span(N,a)\unlhd G$. \par
Define $L=:C_G(N)$ to be the centralizer of $N$ in $G$. Notice that $\Span(N,a)\subset L$, and that $L$ is the kernel of action $G\curvearrowright N$. Hence $L\unlhd G$. \par
Set $\mt{K}:=[G:L]$ and $F=L/N$. Moreover we fix an $n$-th root of unity $\omega$. We need two results from \cite{paper_untwisted_case}.
\begin{lemma}[Lemma 3.5, \cite{paper_untwisted_case}]\label{minimal orbits}
   Let $\psi\in \TT^r\cong \widehat{N}$. Then $G_{\psi}\geq L$ with equality if and only if the orbit $\mc{O}_{\psi}$ has order $\mt{K}$.  
\end{lemma}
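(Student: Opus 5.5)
The statement has two parts. First, $L\le G_\psi$ for every $\psi\in\widehat N$. Second, $G_\psi=L$ exactly when $|\mc O_\psi|=\mt K$. I will prove them in that order, using the orbit–stabilizer relation $|\mc O_\psi|=[G:G_\psi]$ recorded in (\ref{eqn:stabilizer_and_orbit}). That relation applies verbatim with $N$ in place of $\Z^r$, since $N\unlhd G$ and $G$ acts on $\widehat N$ by $(g\cdot\psi)(x)=\psi(g^{-1}xg)$.

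For the inclusion $L\le G_\psi$, take $g\in L=C_G(N)$. Then $g^{-1}xg=x$ for all $x\in N$, so $(g\cdot\psi)(x)=\psi(x)$ for all $x\in N$. Hence $g\cdot\psi=\psi$ and $g\in G_\psi$.

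For the equivalence, since $L\le G_\psi\le G$ and $[G:L]=\mt K<\infty$, the tower law gives
$$\mt K=[G:L]=[G:G_\psi]\,[G_\psi:L]=|\mc O_\psi|\,[G_\psi:L].$$
Suppose $G_\psi=L$. Then $[G_\psi:L]=1$, so $|\mc O_\psi|=\mt K$. Conversely, suppose $|\mc O_\psi|=\mt K$. Then $[G_\psi:L]=1$, and since $L\le G_\psi$ this forces $G_\psi=L$.

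There is no real obstacle in either step. The only point needing care is to note that $\mt K$ is finite, which holds because $N\le L$ and $[G:N]<\infty$; this is what justifies cancelling indices in the displayed identity.
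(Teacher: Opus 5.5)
Your proof is correct and complete. The centralizer $L=C_G(N)$ acts trivially on $\widehat N$, and then orbit--stabilizer combined with $[G:L]=[G:G_\psi]\,[G_\psi:L]$ gives the equivalence; all these indices are finite because $N\le L$. The paper does not prove this lemma itself but cites it from the untwisted case, and your argument is the standard elementary one you would expect behind that cited result.
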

\begin{proposition}[Prop. 3.9, \cite{paper_untwisted_case}]\label{density of minimal stabilizer}
   The set $M:=\{\chi\in \TT^r: G_{\chi}=L\}$ is open and dense in $\TT^r$.
\end{proposition}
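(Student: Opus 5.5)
The plan is to reduce the action of $G$ on $\widehat{N}\cong\TT^r$ to a faithful action of the finite group $Q:=G/L$ by continuous group automorphisms. Then $M$ is the complement of a finite union of proper closed subgroups, and openness and density become elementary topological facts.

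\emph{Step 1: the action factors through $Q$.} Since $L=C_G(N)$ acts trivially on $N$ by conjugation, the formula $(g\cdot\chi)(x)=\chi(g^{-1}xg)$ shows that $L$ fixes every $\chi\in\widehat{N}$. Hence the action descends to $Q=G/L$, which is finite since $\mt{K}=[G:L]<\infty$. Each $q\in Q$ acts on $\TT^r$ by a continuous group automorphism $\alpha_q$, because $\chi\mapsto q\cdot\chi$ is dual to the automorphism $x\mapsto g^{-1}xg$ of $N$. In particular $L\le G_\chi$ for all $\chi$, and
\[
G_\chi=L \iff \alpha_q(\chi)\neq\chi \text{ for all } q\in Q\setminus\{e\}.
\]
Therefore
\[
M=\TT^r\setminus\bigcup_{q\in Q\setminus\{e\}}F_q,\qquad F_q:=\{\chi\in\TT^r:\alpha_q(\chi)=\chi\}.
\]

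\emph{Step 2: openness.} Each $F_q$ is the kernel of the continuous homomorphism $\chi\mapsto\alpha_q(\chi)\chi^{-1}$, so it is a closed subgroup of $\TT^r$. A finite union of closed sets is closed, hence $M$ is open.

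\emph{Step 3: density.} Here I first need each $F_q$ with $q\neq e$ to be a proper subgroup. Suppose $F_q=\TT^r$ for $q=gL$. Then $\chi(g^{-1}xg)=\chi(x)$ for all $\chi\in\widehat{N}$ and all $x\in N$. Since characters of the discrete abelian group $N$ separate points, this forces $g^{-1}xg=x$ for all $x\in N$, so $g\in C_G(N)=L$ and $q=e$. Thus $Q$ acts faithfully and each $F_q$, $q\neq e$, is proper.

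Next, a proper closed subgroup $H$ of the connected group $\TT^r$ has empty interior. Indeed, if $H$ contained an open set, it would be an open subgroup, hence also closed, and connectedness would give $H=\TT^r$. So each $F_q$ is closed and nowhere dense. A finite union of closed nowhere dense sets is nowhere dense (directly, or by Baire category since $\TT^r$ is compact Hausdorff). Its complement $M$ is therefore dense.

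The only step that needs some care is the faithfulness argument in Step 3, namely translating ``$q$ fixes every character'' into ``$q$ centralizes $N$'' via Pontryagin duality. Everything else is routine.
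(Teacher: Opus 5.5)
Your proof is correct and complete. The paper gives no proof of this statement; it imports it verbatim from \cite{paper_untwisted_case}, so there is no in-text argument to compare against, and your argument works as a self-contained proof.

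The two key reductions are sound:
\begin{itemize}
\item the action factors through a faithful action of the finite group $G/L$ by continuous automorphisms of $\TT^r$;
\item each fixed-point set $F_q$ with $q\neq e$ is a proper closed subgroup, hence nowhere dense because $\TT^r$ is connected.
\end{itemize}
The one step that needed care, faithfulness, is handled correctly: $F_q=\TT^r$ forces $g\in C_G(N)=L$, because characters of $N$ separate points.

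The argument is also coordinate-free. The other natural route writes $\alpha_q$ as an integer matrix $A_q\neq I$ and checks that the solutions of $(A_q-I)x\in\Z^r$ in $\R^r/\Z^r$ form a set of dimension less than $r$; your argument avoids that computation.
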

We also need the Mackey Machine which describes, as a set, the irreducible representations of $G$.
\begin{theorem}[Thm. 4.28, \cite{KanTay13}\label{thm:Mackey_machine}]
Let $G$ be a discrete group containing a finite index normal abelian group $A$. Let
$\Omega\subseteq\widehat{A}$ be a set-theoretic cross section of orbits\footnote{We create $\Omega$ by taking exactly one element from each orbit (see discussion above \cite[Thm. 4.28]{KanTay13}). There are no other assumptions, so such an $\Omega$ always exists.} under the action $G\curvearrowright \widehat{A}$. Let $\widehat{G}_{\chi}^{(\chi)}$ denote the subset of elements $\sigma\in\widehat{G}_{\chi}$ where there exists $m\in\Z_{>0}$ such that
\begin{equation}\label{eqn:block_diag_restriction}
\sigma\big|_{A}=\chi^{\oplus m}.
\end{equation}
    Then 
    $$\widehat{G}=\{\ind_{G_{\chi}}^G \sigma: \sigma\in \widehat{G}_{\chi}^{(\chi)}, \chi\in \Omega\}.$$
\end{theorem}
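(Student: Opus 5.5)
The plan is the standard Mackey-machine argument in the discrete case. It rests on spectral analysis of $\pi|_A$ for an irreducible representation $\pi$ of $G$, and uses the fact that all orbits in $\widehat{A}$ are finite because $[G:A]<\infty$. It has two directions. First, every representation of the form $\ind_{G_\chi}^G\sigma$ with $\sigma\in\widehat{G}_{\chi}^{(\chi)}$ is irreducible. Second, every irreducible $\pi$ is of this form with $\chi\in\Omega$.

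\emph{Every irreducible $\pi$ is induced.} Let $\pi\colon G\to \U(\mc{H})$ be irreducible.
\begin{enumerate}
\item Since $A$ is abelian, $\pi|_A$ has a projection-valued spectral measure $P$ on $\widehat{A}$. The relation $\pi(g)\pi(a)\pi(g)^{-1}=\pi(gag^{-1})$ gives the covariance $\pi(g)P(E)\pi(g)^{-1}=P(g\cdot E)$.
\item For any orbit $\mc{O}$, which is finite and hence closed, $P(\mc{O})$ commutes with $\pi(G)$. By irreducibility it is $0$ or $1$.
\item Because $\widehat{A}/G$ is the quotient of the compact metrizable space $\widehat{A}$ by the finite group $G/A$, it is Hausdorff and second countable. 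Hence $P$, pushed to $\widehat{A}/G$, is a $\{0,1\}$-valued projection measure on a countably separated space, so it is concentrated on a single orbit $\mc{O}_\chi$. Choose $\chi\in\Omega$ in this orbit.
\item Since $\mc{O}_\chi$ is finite, $\mc{H}=\bigoplus_{\psi\in\mc{O}_\chi}P(\{\psi\})\mc{H}$. Put $\mc{H}_\chi:=P(\{\chi\})\mc{H}\neq 0$. By covariance, $\mc{H}_\chi$ is invariant under $G_\chi$, and $\pi(g)\mc{H}_\chi=P(\{g\cdot\chi\})\mc{H}$.
\item Let $\sigma$ be $\pi$ restricted to $G_\chi$ on $\mc{H}_\chi$. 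Then $\mc{H}=\bigoplus_{gG_\chi}\pi(g)\mc{H}_\chi$, which is exactly the internal characterization of induction. So $\pi\simeq \ind_{G_\chi}^G\sigma$, and $\sigma|_A=\chi\cdot 1$.
\item $\sigma$ is irreducible, since a proper $G_\chi$-invariant subspace $W$ would give a proper $G$-invariant subspace $\bigoplus_g \pi(g)W$.
\item The multiplicity is finite. Twisting by $\chi$, $\sigma$ factors through a projective representation of the finite group $G_\chi/A$. Irreducible representations of a finite group's twisted group algebra are finite-dimensional, so $\sigma|_A=\chi^{\oplus m}$ with $m\in\Z_{>0}$.
\end{enumerate}

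\emph{Every such induced representation is irreducible.} Let $\sigma\in\widehat{G}_{\chi}^{(\chi)}$ and $\pi=\ind_{G_\chi}^G\sigma$ on $\bigoplus_{gG_\chi}\pi(g)\mc{H}_\sigma$. Choose coset representatives $g$ for $G/G_\chi$. Then $A$ acts on the summand $\pi(g)\mc{H}_\sigma$ by the character $g\cdot\chi$, and these characters are pairwise distinct precisely because $G_\chi$ is the stabilizer. Now take $T$ commuting with $\pi(G)$. Then $T$ commutes with $\pi(A)$, so it preserves each $A$-isotypic summand. In particular $T$ restricts to an operator on $\mc{H}_\sigma$ commuting with $\sigma(G_\chi)$, which is a scalar by Schur. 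Translating by $\pi(g)$ shows $T$ is the same scalar on every summand, so $\pi$ is irreducible.

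\emph{Expected difficulty.} The step that needs the most care is the concentration on a single orbit, step 3 above. I would handle it exactly as indicated: the finiteness of $G/A$ makes the orbit space a compact metrizable space, so any $\{0,1\}$-valued spectral measure on it is a point mass. Everything else is the routine internal characterization of induced representations and Schur's lemma.
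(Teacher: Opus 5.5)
The paper gives no proof of this statement; it quotes it from \cite[Thm.~4.28]{KanTay13}, where it comes out of the general Mackey machine. Your proof is correct. It specializes that machinery to the finite-index case in a self-contained way. Since $[G:A]<\infty$, every $G$-orbit in $\widehat{A}$ is finite, so the system of imprimitivity collapses to a finite orthogonal decomposition of $\mathcal{H}$ into $A$-eigenspaces. Induction then becomes the internal direct sum $\bigoplus_{gG_\chi}\pi(g)\mathcal{H}_\chi$. Both inclusions reduce to Schur's lemma plus the fact that the characters $g\cdot\chi$, for $gG_\chi\in G/G_\chi$, are pairwise distinct. The citation buys generality. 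Your route buys an argument needing only the spectral theorem for $A$ and Schur's lemma, and it proves exactly the set equality that is stated.

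One point to tighten. In step 3 you use that $\widehat{A}$ is compact metrizable and that $\widehat{A}/G$ is countably separated. This needs $A$ to be countable, but the statement allows any discrete $G$. For $A=\bigoplus_{\R}\Z_2$, the dual $\{0,1\}^{\R}$ is too large to be countably separated. This is harmless for the paper, where $G$ is finitely generated.

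You can drop countability by replacing steps 1--4 with a Gelfand-duality argument:
\begin{enumerate}
\item Extend $\pi|_A$ to $\tilde{\pi}\colon C(\widehat{A})\to B(\mathcal{H})$. Covariance gives $\tilde{\pi}(C(\widehat{A})^G)\subseteq\pi(G)'$.
\item By irreducibility, $\tilde{\pi}$ restricts to a unital character of $C(\widehat{A})^G\cong C(\widehat{A}/G)$, i.e.\ evaluation at some orbit $\mathcal{O}_\chi$.
\item Take $0\le f\in C(\widehat{A})$ vanishing on $\mathcal{O}_\chi$. The average $\tilde{f}=\sum_{gA\in G/A}g\cdot f$ is invariant, dominates $f$, and vanishes on $\mathcal{O}_\chi$. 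Hence $0\le\tilde{\pi}(f)\le\tilde{\pi}(\tilde{f})=0$.
\item So $\tilde{\pi}$ factors through $C(\mathcal{O}_\chi)\cong\C^{|\mathcal{O}_\chi|}$, which is precisely your step 4.
\end{enumerate}

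Relatedly, step 3 needs $P(E)\in\{0,1\}$ for every $G$-invariant Borel set $E$, not only for orbits as in step 2. The same covariance argument gives this.
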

Our approach shares some elements with the untwisted case, but one major change is needed. We will illustrate why this is the case.\par
Let 
$$N_{\mt{K}}:=\{\chi \in \widehat{L}_{1D}\,\colon\, G_{\rho(\chi)}=L\}$$
where $\widehat{L}_{1D}$ is the set of 1-dimensional irreducible representations (i.e characters) of $L$, and $\rho(\chi)$ is the restriction of $\chi$ to $N$. By \cite[Prop. 3.10]{paper_untwisted_case} and its proof, $\dim(N_{\mt{K}})=r$. However, in our setting, we need to consider the subset of $N_{\mt{K}}$ with the extra condition that $\chi(a)=\omega$. It turns out that this extra condition can cause the set to become empty, as the following example illustrates. A key step in~\cite{paper_untwisted_case}, is their Lemma 3.2, which allows one to extend a character on $N$ to one on~$L$. However, this is no longer possible with $\Span(N, a)$ replacing $N$ itself, as the following example shows.

\begin{example}
     Let $G=\Z\times\mathbb{H}_3(\Z_2)$ (here $\mathbb{H}_3(\Z_2)$ is the set of upper-triangular $3\times 3$ matrices in $\Z_2$ with all diagonal entries equal to 1). Let $g$ be the nontrivial element of the center of $\mathbb{H}_3(\Z_2)$ and note that in this case $L=G$, thus $L_{ab}\cong \Z\times\Z_2\times\Z_2$, and $g\in[L,L]$, so the nontrivial character on $\langle g\rangle$ does not extend to a 1-dimensional character of $L$. \par
\end{example}
In our previous example, the character extends to a 2-dimensional irreducible representation. This motivated us to consider irreducible representations of $L$ with a higher dimension than 1. \par
Just like \cite{paper_untwisted_case}, our goal is to define a map that is homeomorphic to its image, and its image is a subset of some $\Prim_{s}(C^*(G))$. We first need the following Lemma.
\begin{lemma}\label{embedding lemma}
    Letting $d=|F|$, there is an injective map $\iota:L\to \Z^r\times F$ such that $\iota(N)=d\Z^{r}$.
    \begin{proof}
       Because $F$ is a finite group, $H^2(F;\Z^r)$ is a torsion group (\cite[Corollary III.10.2]{coho}) where all elements have order at most $d$. Thus the map $\Z^r\to \Z^r$ defined via a multiplication with $d$, induces the zero map $H^2(F;\Z^r)\to H^2(F;\Z^r)$. Then by Proposition~\ref{exercise}, we get a commutative diagram as follows

$$\begin{tikzcd}
e\arrow{r}&N\cong\Z^r\arrow{r}\arrow{d}{\bullet d}&L\arrow{r}{q}\arrow{d}{\iota}&F\arrow{r}\arrow{d}{\id}&e\\
e\arrow{r}&\Z^r\arrow{r}&\Z^r\times F\arrow{r}{\widetilde{q}}&F\arrow{r}&e.
\end{tikzcd}$$
Finally, commutativity implies that $\iota(N)=d\Z^r$.
    \end{proof}
\end{lemma}
Let $\{e_1,...,e_r\}$ be the canonical basis of $N\cong \Z^r$ and define $U\subset\widehat{N}\cong\TT^r$ via
$$U=\{\chi\in\widehat{N}:G_{\chi}=L,\chi(e_i)\ne -1\}.$$
Notice that, for each $i=1,2,...,r$, the set $\{\chi\in \widehat{N}:\chi(e_i)\neq -1\}$ is open and dense in $\widehat{N}$. Thus, Lemma \ref{density of minimal stabilizer} implies that $U$ is dense, as an intersection of (finite) open and dense subsets of $\widehat{N}$.
\begin{lemma}\label{existance of irrep on finite group}
  There exists $m\in \Z$ and $\pi\in \Prim_m(C^*(F))$ such that $(\pi \circ q)(a)=\diag(\omega,\ldots,\omega)$.
  \begin{proof}
      Recall that $q$ is the natural map from $L$ to $F$. Start with the character $\pi_0 \colon \langle q(a)\rangle\to \TT$ defined via $\pi_0(q(a))=\omega$. Set $\pi_1:=\ind_{\langle q(a)\rangle}^F \pi_0:L\to \U(|F|/n)$. By construction of the induced representation, and the fact that $a\in \ZZ(L)$, we deduce that $\pi_1(q(a))=\diag(\omega,...,\omega)$. \par

      Notice that $\pi_1$ is a direct sum of irreducible representations. Let $\pi \colon F\to U_m(\C)$ be one of these. The proof is complete because $\pi(q(a))=\diag(\omega,...,\omega)$.
  \end{proof}
\end{lemma}
Recall from complex analysis, that the map $\sqrt[d]{\cdot}\colon \TT/\{-1\}\to \{e^{2\pi i t}: 0\leq t\leq \frac{1}{d}\}$ that sends a complex number to its $d$-th root is continuous. Hence, for each $\chi\in U$ we may define $\sqrt[d]\chi$ to a character defined by $\sqrt[d]{\chi}(e_i)=\sqrt[d]{\chi(e_i)}$. \par
Let $\pi \colon F\to U_m(\C)$ be the irreducible representation defined in Lemma \ref{existance of irrep on finite group}. We define $\varphi_\chi \colon \Z^r\times F\rightarrow U(m)$ via
$$\varphi_\chi(x,y)=\sqrt[d]{\chi}(x)\pi(y)$$
where the multiplication is the scalar multiplication $\C\times M_m(\C)\to M_m(\C)$.\par
Then we define $\Phi \colon U\to \Prim(C^*(G))$ via
$$\Phi(\chi)=[\ind_L^G(\varphi_\chi\circ\iota)].$$
\begin{lemma}\label{Phi is well defined}
The map $\Phi$ defined above is well-defined.
\end{lemma}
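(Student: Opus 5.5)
To show that $\Phi$ is well defined, we must check that for each $\chi\in U$, the map $\ind_L^G(\varphi_\chi\circ\iota)$ is an irreducible representation of $G$, so that its class lies in $\Prim(C^*(G))$. (Independence of representatives is automatic, since $\Phi(\chi)$ is defined as an equivalence class.) The argument has three steps.

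\textbf{Step 1: $\varphi_\chi\circ\iota$ is an irreducible representation of $L$.} Since $\Z^r$ is central in $\Z^r\times F$, we have
$$\varphi_\chi((x,y)(x',y'))=\sqrt[d]{\chi}(x)\sqrt[d]{\chi}(x')\pi(y)\pi(y').$$
So $\varphi_\chi$ is a genuine unitary representation, namely the outer tensor product of the character $\sqrt[d]{\chi}$ of $\Z^r$ with the irreducible representation $\pi$ of $F$. Its image contains $\pi(F)$, whose commutant is $\C\cdot 1$ by irreducibility of $\pi$. Composing with $\iota$ preserves irreducibility. To see this, note that $\widetilde{q}\circ\iota=q$ by the commutative diagram in Lemma \ref{embedding lemma}. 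Write $\iota(l)=(x_l,q(l))$. Then $\varphi_\chi(\iota(l))$ is a scalar multiple of $\pi(q(l))$. Since $q$ is surjective onto $F$, the set $\varphi_\chi\circ\iota(L)$ spans the same algebra as $\pi(F)$. Hence $\varphi_\chi\circ\iota$ is irreducible.

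\textbf{Step 2: the restriction to $N$ is $\chi^{\oplus m}$.} For $v\in N$, Lemma \ref{embedding lemma} gives $\iota(v)=(dv,e)$ (with the convention of that diagram). Therefore
$$\varphi_\chi(\iota(v))=\sqrt[d]{\chi}(dv)\cdot 1=\chi(v)\cdot 1,$$
because $\sqrt[d]{\chi}(e_i)^d=\chi(e_i)$ and both sides are characters of $N$. So $(\varphi_\chi\circ\iota)|_N=\chi^{\oplus m}$.

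\textbf{Step 3: irreducibility of the induced representation.} Since $\chi\in U$, we have $G_\chi=L$. Step 2 shows that $\varphi_\chi\circ\iota$ belongs to $\widehat{G}_\chi^{(\chi)}=\widehat{L}^{(\chi)}$. Theorem \ref{thm:Mackey_machine}, applied with $A=N$ and an orbit cross-section $\Omega$ containing $\chi$, then yields that $\ind_L^G(\varphi_\chi\circ\iota)$ is an irreducible representation of $G$. Its dimension is $\mt{K}m$, so it is finite-dimensional and defines a point of $\Prim_{\mt{K}m}(C^*(G))$.

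If needed, we can alternatively verify irreducibility in Step 3 directly via Mackey's criterion. The conjugates of $\varphi_\chi\circ\iota$ by elements $g\notin L$ restrict to $g\cdot\chi\neq\chi$ on $N$, so they are disjoint from $\varphi_\chi\circ\iota$.

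The main obstacle is the bookkeeping in Step 2: confirming from the diagram in Lemma \ref{embedding lemma} that $\iota|_N$ is exactly multiplication by $d$ into the $\Z^r$ factor with trivial $F$-component, and that the branch of $\sqrt[d]{\cdot}$ chosen on $\TT\setminus\{-1\}$ gives $(\sqrt[d]{\chi})^d=\chi$. Both hold directly from the definitions.

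Finally, we note that $(\varphi_\chi\circ\iota)(a)=\sqrt[d]{\chi}(x_a)\,\omega\cdot 1$, by the choice of $\pi$ in Lemma \ref{existance of irrep on finite group}. This identity is not required for well-definedness but will matter later.
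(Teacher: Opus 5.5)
Your proposal is correct and follows essentially the same route as the paper's proof: irreducibility of $\varphi_\chi\circ\iota$ from $\widetilde{q}\circ\iota=q$ together with the irreducibility of $\pi$, then $(\varphi_\chi\circ\iota)|_N=\chi^{\oplus m}$ from $\iota(N)=d\Z^r$, then the Mackey machine using $G_\chi=L$. Your closing side remark is also fine: since $a$ has finite order, $x_a=0$, so the formula reduces to $\omega\cdot 1$.
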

\begin{proof}
We need to show that $\ind_L^G(\varphi_\chi\circ\iota)$ is irreducible. 

First, we will show that $\varphi_\chi\circ\iota$ is irreducible. To show this, note for each $g\in F$, there is some $h\in L$ so that $q(h)=g$. By commutativity of the diagram on the proof of Lemma \ref{embedding lemma}, $(\widetilde{q}\circ \iota)(h)=q(h)=g$. So, by construction of the map $\varphi_{\chi}$, it follows that $\varphi_\chi(\iota(h))$ is some scalar times $\pi(g)$. Thus, if $p$ is a projection that commutes with the image of $\varphi_\chi\circ\iota$, then it also commutes with the image of $\pi$. Since $\pi$ is irreducible $p\in\{0,1_m\}$, so $\varphi_\chi\circ\iota$ is irreducible.

Since $\chi\in U$, $G_{\chi}=L$. Moreover, the restriction of $\varphi_\chi\circ\iota$ to $N$ is $\chi^{\oplus m}$. Indeed, let $b\in N$. Then $\varphi_{\chi}\circ \iota(b)=\varphi(\chi)(d\cdot b, e)=\sqrt[d]\chi(d\cdot b)\pi(e)=\sqrt[d]{\chi}(d\cdot b)\cdot 1_m=\left(\sqrt[d]{\chi}(b)\right)^d=\chi(b)\cdot 1_m$. Thus, it follows from the Mackey machine that $\ind_L^G(\varphi_\chi\circ\iota)$ is irreducible.
\end{proof}
For each $k$, define \begin{equation}\label{11}
  A_{k,\omega}^G:=\{[\sigma]\in \Prim_k(C^*(G)): \sigma(a)=\diag(\omega,...,\omega)\} \,. 
\end{equation} \par

\begin{lemma}\label{image of Phi}
    $\Phi(U)\subset A_{\mt{K}\cdot m, \omega}^G$.
    \begin{proof}
        It is clear from the above construction that $\Phi(U)\subset \Prim_{\mt{K}\cdot m}(C^*(G))$. Because $a$ has finite order, it follows that $\iota(a)=(e,b)$ for some $b\in F$ such that $q(a)=b$. Thus, for every $\chi\in U$, we have $\varphi_{\chi}\circ \iota(a)=\pi(b)=(\pi\circ q)(a)=\diag(\omega,\omega,...,\omega)$. Note that the last equality follows from Lemma \ref{existance of irrep on finite group}. 
    \end{proof}
\end{lemma}
Before we can prove that $\Phi$ is continuous, we need to fix the topology on the domain and codomain. Note that $U$ is a subspace of $\TT^r$, and we endow it with the subspace topology. Here we view $\TT^r$ with the usual (metrizable) topology. We will use $d$ to denote a corresponding metric. Recall that $\Prim(C^*(G))$ is endowed with the Jacobson topology.
\begin{lemma}\label{continuity of Phi}
The map $\Phi$ defined above is continuous.
\end{lemma}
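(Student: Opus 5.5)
We prove continuity by factoring $\Phi$ through maps that are continuous at the level of concrete representations, and only then passing to the Jacobson topology. Write $\Phi=\Theta\circ\Psi$. Here $\Psi\colon U\to \Rep_{\mt{K}m}(C^*(G))$ sends $\chi$ to the concrete representation $\ind_L^G(\varphi_\chi\circ\iota)$, realized on a fixed Hilbert space $\C^{\mt{K}}\otimes\C^m$ using a fixed set of coset representatives $g_1,\dots,g_{\mt{K}}$ of $G/L$. The map $\Theta\colon \Irr_{\mt{K}m}(C^*(G))\to \Prim_{\mt{K}m}(C^*(G))\subseteq\Prim(C^*(G))$ is the quotient map sending a representation to its equivalence class.

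First we check that $\Theta$ is continuous from the point-norm topology to the Jacobson topology. This is standard (see \cite[3.5.8]{Dix77}). In Dixmier's framework the topology on $\widehat{A}$ coming from pointwise weak convergence of representations agrees with the Jacobson topology, and point-norm convergence on $\Irr_k$ implies pointwise weak convergence. Alternatively, one can argue directly: for $x\in A$, the map $\pi\mapsto\|\pi(x)\|$ is continuous on $\Rep_k$, and the Jacobson topology is the weakest topology making all the maps $[\pi]\mapsto\|\pi(x)\|$ lower semicontinuous. So the preimage of an open set $\{[\pi]:\pi(I)\neq 0\}$ is $\bigcup_{x\in I}\{\pi:\|\pi(x)\|>0\}$, which is open.

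Second, we check that $\Psi$ is continuous. By the universal property it suffices, since $C^*(G)$ is generated by the unitaries $\eta_g$ and the representations are uniformly bounded, to show that $\chi\mapsto\Psi(\chi)(g)$ is continuous for each $g\in G$. This uses the usual $\epsilon/3$ argument: approximate $x\in C^*(G)$ by a finite linear combination of group elements. In the chosen model of the induced representation, $\Psi(\chi)(g)$ is the block matrix whose $(i,j)$ block is $(\varphi_\chi\circ\iota)(g_i^{-1}gg_j)$ when $g_i^{-1}gg_j\in L$, and $0$ otherwise. The pattern of nonzero blocks does not depend on $\chi$. Each nonzero block equals $\sqrt[d]{\chi}(x_{ij})\,\pi(y_{ij})$, where $\iota(g_i^{-1}gg_j)=(x_{ij},y_{ij})\in\Z^r\times F$. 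Writing $x_{ij}=\sum_k c_k e_k$, we get $\sqrt[d]{\chi}(x_{ij})=\prod_k\sqrt[d]{\chi(e_k)}^{\,c_k}$. This is continuous in $\chi\in U$, because $\chi(e_k)\neq-1$ on $U$ and the branch of $\sqrt[d]{\cdot}$ is continuous on $\TT\setminus\{-1\}$.

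The main obstacle is this last step. The root function is defined only away from $-1$, which is exactly why $U$ excludes $\chi(e_i)=-1$. One also has to take care that the induced representations for different $\chi$ are modeled on the same space with the same coset representatives, so that point-norm convergence makes sense. Combining the two steps, $\Phi=\Theta\circ\Psi$ is continuous.
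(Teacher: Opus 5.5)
Your proof is correct and follows the same route as the paper. You show $\chi\mapsto\varphi_\chi\circ\iota$ is pointwise continuous, then that the induced representations vary continuously in $\Rep_{\mt{K}m}(C^*(G))$, then pass to the Jacobson topology via \cite[3.5.8]{Dix77}. The one difference is that you check the induction step by hand with the block-matrix model on fixed coset representatives, whereas the paper cites \cite[Lemma 4.22]{CW24} for it.
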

\begin{proof}
Let $\chi_n\to \chi$. Then, by construction, $\varphi_{\chi_n}\to \varphi_{\chi}$, where convergence is pointwise on $U(m)$. Hence $\varphi_{\chi_n}\circ \iota \to \varphi_{\chi}\circ \iota$, again pointwise. By \cite[Lemma 4.22]{CW24}, we deduce that $\ind_L^G(\varphi_{\chi_n}\circ\iota)\to \ind_L^G(\varphi_\chi\circ\iota)$ in $\Rep(C^*(G))$. Finally, by  \cite[3.5.8 (p.83)]{Dix77} the convergence also occurs in $\Prim(C^*(G))$. It follows that $\Phi$ is continuous.
\end{proof}
\begin{lemma}\label{intermediate lemma for injectivity}
    Suppose that $\Phi(\chi)=\Phi(\chi')$ for some $\chi,\chi'\in U$. Then $\chi$ and $\chi'$ are in the same orbit under the action $G\curvearrowright \widehat{N}$.
\end{lemma}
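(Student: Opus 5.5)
Suppose $\Phi(\chi)=\Phi(\chi')$. Then the induced representations $\ind_L^G(\varphi_\chi\circ\iota)$ and $\ind_L^G(\varphi_{\chi'}\circ\iota)$ are unitarily equivalent. The idea is to restrict both to the normal abelian subgroup $N$ and compare the characters that appear.

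By the Mackey subgroup theorem for a normal subgroup $L\unlhd G$ of finite index,
\[
\big(\ind_L^G\varphi\big)\big|_L\cong\bigoplus_{gL\in G/L} g\cdot\varphi ,\qquad (g\cdot \varphi)(\ell)=\varphi(g^{-1}\ell g).
\]
One can also see this directly: $\ind_L^G\varphi$ acts on $\bigoplus_{gL\in G/L} g\mc{H}_\varphi$, and each summand is $L$-invariant because $L$ is normal. We restrict further to $N\le L$. In the proof of Lemma \ref{Phi is well defined} we showed that $(\varphi_\chi\circ\iota)|_N=\chi^{\oplus m}$. Since $N$ is normal, conjugating by $g$ gives $(g\cdot(\varphi_\chi\circ\iota))|_N=(g\cdot\chi)^{\oplus m}$. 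Hence
\[
\ind_L^G(\varphi_\chi\circ\iota)\big|_N\cong\bigoplus_{gL\in G/L}(g\cdot\chi)^{\oplus m},
\]
and the analogous decomposition holds for $\chi'$.

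Restricting a unitary equivalence of $G$-representations to $N$ gives a unitary equivalence of $N$-representations. For the abelian group $N$, a finite-dimensional unitary representation decomposes uniquely into characters, up to multiplicity. These are the joint eigenspaces of the commuting unitaries $\{\pi(b): b\in N\}$, and the characters occurring are exactly those with a nonzero joint eigenspace. Therefore the set of characters in the restriction of $\Phi(\chi)$, namely the orbit $\mc{O}_\chi=\{g\cdot\chi\}$, equals the corresponding set for $\chi'$, namely $\mc{O}_{\chi'}$. In particular $\chi'\in\mc{O}_{\chi'}=\mc{O}_\chi$, so $\chi'$ lies in the $G$-orbit of $\chi$.

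The only step needing care is the restriction formula for induced representations together with the compatibility of the action conventions. The action here is $(g\cdot\chi)(b)=\chi(g^{-1}bg)$, and the conjugated representation must be written with the same convention. Even if the conventions disagreed by an inverse, the resulting set would still be the full orbit, since $g$ and $g^{-1}$ range over all of $G$. The argument does not use $\chi,\chi'\in U$ beyond the identity $(\varphi_\chi\circ\iota)|_N=\chi^{\oplus m}$, which comes from the construction of $\varphi_\chi$ through $\sqrt[d]{\chi}$ and Lemma \ref{embedding lemma}.
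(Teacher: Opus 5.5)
Your proof is correct and follows essentially the same route as the paper. Both restrict $\Phi(\chi)$ to $N$ using the restriction formula for induction from the normal subgroup $L$, then compare the characters of $N$ that occur. You also write out the final step, uniqueness of the decomposition of a finite-dimensional unitary representation of $N$ into characters, which the paper only cites from the proof of \cite[Lemma 4.3]{paper_untwisted_case}, and you correctly keep the multiplicity $m$ that the paper's displayed formula omits.
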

\begin{proof}
    Assume that $\Phi(\chi)=\Phi(\chi')$ for some $\chi,\chi'\in U$.
    By the construction of the induced representation, we have
    $$\Phi(\tau)|_{N}\cong\bigoplus_{g\in G/L}(g\cdot\tau)$$ for every $\tau\in U$. In particular, the above formula holds for $\chi$ and $\chi'$. By the proof of \cite[Lemma 4.3]{paper_untwisted_case}, it follows that $\chi$ and $\chi'$ are in the same orbit under the action $G\curvearrowright \widehat{N}$.
\end{proof}
\begin{lemma}\label{injective2}

\begin{enumerate}[label=\roman*.]
    \item For any $\chi\in U$ there exists $\varepsilon>0$ so that $\overline{B_\varepsilon(\chi)}$ is homeomorphic to its image under $\Phi$.
    \item $\dim(A_{\mt{K}\cdot m,\omega}^G)\geq r$. Here $B_\varepsilon(\chi)$ is the ball of radius $\varepsilon$ on our chosen metric.  
\end{enumerate}
\begin{proof}
    {\em i.} Pick any $\chi\in U$. We will find $\varepsilon>0$ such that $\Phi$ is injective on $\overline{B_\varepsilon(\chi)}$. \par
    For $\eta\in U$ define 
$$h(\eta)=\min_{g\in G/L\setminus\{e\}}d(g\cdot\eta,\eta)\,.$$ 
Note that $h$ depends continuously on $\eta$, and is always positive since the stabilizer of $\eta$ is $L$. Pick some $\delta_1>0$ such that $\overline{B_{\delta_1}(\chi)}\subseteq U$. Then by compactness, $h$ must obtain a minimum on $\overline{B_{\delta_1}(\chi)}$; call this minimum $\delta_2$. Let $\varepsilon=\min(\delta_1,\delta_2/3)$. For any distinct $\eta,\eta'\in\overline{B_\varepsilon(\chi)}$, we see that $d(\eta,\eta')<\delta_2\le h(\eta)$ so $\eta'$ cannot be in the $G/L$-orbit of $\eta$. It follows from Lemma~\ref{intermediate lemma for injectivity} that $\Phi(\eta)\ne\Phi(\eta')$. \par
So $\Phi$ is injective on $\overline{B_\varepsilon(\chi)}$. Consider the restriction $$\Phi|_{\overline{B_{\varepsilon}(\chi)}}\colon \overline{B_{\varepsilon}(\chi)}\to \Prim_{\mt{K}\cdot m}(C^*(G))\,.$$
This is an injective and continuous map. Moreover, $\overline{B_{\varepsilon}(\chi)}$ is compact and $\Prim_{\mt{K}\cdot m}(C^*(G))$ is Hausdorff by Remark \ref{remarks about C*-algebras of fg VA groups}. Hence, it is a homeomorphism onto its image.

{\em ii.} By Remark \ref{remarks about C*-algebras of fg VA groups}, $\Prim_{\mt{K}\cdot m}(C^*(G))$ is totally normal. The result follows from \cite[Theorem 6.4]{Pea75}, combined with Lemma \ref{image of Phi}, and the fact that $\dim(\overline{B_{\varepsilon}(\chi)})=r$.
\end{proof}
\end{lemma}
We are now ready to prove our main result.
\begin{theorem}\label{main theorem}
    Let $G$ be a finitely generated, virtually abelian group $G$ and $[\sigma]$ a rational cohomology class. Then $\dimnuc(C^*(G,\sigma))=h(G)$.
    \begin{proof}
     By Remark \ref{correspondence}, there exists a finitely generated, virtually abelian group $\widetilde{G}$, a natural number $n$, and $\omega\in \TT$ such that $\omega^n=1$ and for every $k\in \N$, $\Prim_k(C^*(G),\sigma)\cong A_{k,\omega}^{\widetilde{G}}$. Set $s:=\mt{K}\cdot m$. \par
        By Lemma \ref{injective2}, \cite[Main Theorem]{Win04}, \cite[Prop. 2.14]{BL24}, and the above, we have the following series of inequalities
        $$h(G)\leq \dim(A_{s, \omega}^{\widetilde{G}})\leq \max_k\dim\Prim_k(C^*(G,\sigma))=\dimnuc C^*(G,\sigma)\leq h(G)\,.$$
        Hence, equality must hold everywhere, so the result follows.
    \end{proof}
\end{theorem}

\section{Irrational Twists}
The main goal of this section is to show that, in the case of the higher dimension noncommutative Tori, we never have equality in Theorem \ref{main theorem}. \par
Throughout this section, we will use $\mathcal{T}_r$ to define the set of $r\times r$ skew-symmetric matrices with real coefficients.
\begin{definition}\label{higher dimensional non-commutative torus}
Let $\Theta\in \mathcal{T}_r$. We can define the \emph{$r$-dimensional non-commutative torus $A_{\Theta}$} as the twisted group $C^*$-algebra $C^*(\Z^r,\sigma_{\Theta})$, where $\sigma_{\Theta}$ is the 2-cocycle on $\Z^r$ defined via
$$\sigma_{\Theta}(x,y)=\exp(\pi i \langle\Theta x,y\rangle)$$
for every $x,y\in \Z^r$. \par
Here $\langle \cdot, \cdot\rangle$ is the standard inner product in $\R^r$.
\end{definition}
\begin{remark}\label{all twisted group c* algebras are nc tori}
    It turns out that every twisted group $C^*$-algebra of $\Z^r$ is isomorphic to an $r$-dimensional non-commutative torus. For an explanation why this is true we refer the reader to \cite[Section 2.2]{elliott2006moritaequivalencesmoothnoncommutative}.
\end{remark}
We also need the following definition.
\begin{definition}[Definition 1.6, \cite{simple_nc_torus}] \label{degenerate}
  A matrix $\Theta\in \mathcal{T}_r$ is called \emph{non-degenerate} if whenever $x\in \Z^r$ satisfies $\exp(2\pi i \langle x,\Theta y\rangle)=1$ for every $y\in \Z^r$, then $x=0$. Otherwise, we say that $\Theta$ is \emph{degenerate}.
\end{definition}

\begin{example}\label{rotation algebras skew symmetric matrix}
    Let $\theta\in [0,1]$. Then the rotation algebra $A_{\theta}$ is isomorphic to $A_{\Theta_{\theta}}$, where $\Theta_{\theta}=\begin{pmatrix}0&-\theta\\\theta&0\\ \end{pmatrix}$. Moreover $\Theta_{\theta}$ is non-degenerate if and only if $\theta \notin \Q$. 
\end{example}
Notice that in the above example, the $C^*$-algebra that arises is simple if and only if the cocycle in non-degenerate. This is not a coincidence, and it can actually be generalized by the following result due to N.C. Phillips.
\begin{theorem}[Theorem 3.8, \cite{simple_nc_torus}]\label{when nc torus is simple?}
 Let $\Theta$ be a $r\times r$ skew symmatric real matrix that is non-degenerate, and $r\geq 2$. Then $A_{\Theta}$ is a simple, $A\TT$-algebra.
\end{theorem}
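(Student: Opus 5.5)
The plan has two parts: first prove that $A_\Theta$ is simple, then prove that it is an $A\TT$-algebra. For the second part I would not try to write down an inductive limit by hand. Instead I would check the hypotheses of a classification theorem and then identify $A_\Theta$ with an $A\TT$ model having the same invariant. This is the route by which I expect the cited result of Phillips to be obtained.

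\emph{Simplicity.} Write $\sigma=\sigma_\Theta$ and consider its antisymmetrization, the bicharacter
$$b(x,y)=\sigma(x,y)\overline{\sigma(y,x)}=\exp(2\pi i\langle \Theta x,y\rangle).$$
For a discrete abelian group, the twisted group $C^*$-algebra is simple exactly when the symmetrizer subgroup $\{x: b(x,y)=1 \text{ for all } y\}$ is trivial. This is Slawny's criterion, also found in Packer--Raeburn. Non-degeneracy in Definition~\ref{degenerate} is precisely this condition, because $\langle x,\Theta y\rangle=-\langle \Theta x,y\rangle$ by skew-symmetry. To prove the criterion directly, I would use the dual action of $\widehat{\Z^r}=\TT^r$ on $A_\Theta$, given by $\eta_x\mapsto z^x\eta_x$. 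Its fixed-point conditional expectation is faithful. If $J$ is a nonzero ideal, I would average over conjugation by the unitaries $\eta_y$, which multiplies $\eta_x$ by $b(x,y)$. Triviality of the symmetrizer means this averaging separates every nonzero Fourier mode $x$ from the mode $0$. Hence $J$ contains an invertible scalar, so $J=A_\Theta$. The same argument shows that the canonical trace $\tau(\eta_x)=\delta_{x,0}$ is the unique tracial state.

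\emph{Verifying the classification hypotheses.} Since $\Z^r$ is amenable, $A_\Theta$ is nuclear and satisfies the UCT, being an iterated crossed product by $\Z$ starting from $C(\TT)$. It is unital, separable, simple, and has a unique trace. Its $K$-theory is $K_0\cong\Z^{2^{r-1}}\cong K_1$, torsion free, by Elliott's computation using the Pimsner--Voiculescu sequence inductively. The trace maps $K_0$ onto $\Z+\sum_{i<j}\theta_{ij}\Z+\cdots$, where the higher terms are Pfaffians. It remains to show that $A_\Theta$ has tracial rank zero, or at least real rank zero together with stable rank one and $\mathcal{Z}$-stability. Lin's classification theorem would then identify $A_\Theta$ with a unital simple $A\TT$-algebra of real rank zero having the same Elliott invariant. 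Such a model exists by Elliott's range result, since the ordered $K_0$ here is a simple dimension group with unique state.

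\emph{Main obstacle: tracial rank zero.} I expect this step to be the hardest. My first attempt would be to reduce to a normal form for $\Theta$. Using the $SO(r,r|\Z)$ action on $\mathcal{T}_r$ and the corresponding Morita equivalences of Rieffel--Schwarz and Li, I would move to a $\Theta'$ whose upper-left $2\times 2$ entry $\theta_{12}$ is irrational. Tracial rank zero is preserved by Morita equivalence among simple unital algebras, so this change is harmless. Then I would write $A_{\Theta'}$ as an iterated crossed product $A_{\theta_{12}}\rtimes\Z\rtimes\cdots\rtimes\Z$. The irrational rotation algebra $A_{\theta_{12}}$ has tracial rank zero by Elliott--Evans. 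At each stage I would check that the automorphism acts trivially on $K$-theory and has the tracial Rokhlin property, which holds because its restriction to the relevant irrational direction is outer with enough aperiodicity. I would then invoke the permanence result of Osaka and Phillips: crossed products of simple tracial-rank-zero algebras by automorphisms with the tracial Rokhlin property, under unique trace, again have tracial rank zero. The delicate point is that non-degeneracy of the whole $\Theta$ must be used to show that each intermediate automorphism is outer on the relevant tracial GNS von Neumann algebra. This is needed because the intermediate algebras may themselves be degenerate.
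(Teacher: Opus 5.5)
The paper does not prove this statement; it quotes it from Phillips, so your outline has to be judged on its own. Your simplicity argument is sound: Slawny's criterion, obtained by averaging against $b$ and using the faithful conditional expectation. Your endgame is also sound: tracial rank zero, then Lin's classification together with Elliott's range theorem for simple A$\TT$ algebras of real rank zero, using that the $K$-groups are torsion free. The gap is in the step you yourself call delicate, and the remedy you suggest does not address what actually fails.

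\textbf{Degenerate intermediate tori break the induction.} You only normalize $\Theta$ so that $\theta_{12}\notin\Q$. This needs no Morita equivalence: a nondegenerate $\Theta$ has an irrational entry, and a permutation of the basis moves it to position $(1,2)$. The permanence theorem you invoke requires the algebra being crossed to be \emph{simple} of tracial rank zero. Moreover, an automorphism of a simple algebra with the tracial Rokhlin property always has a simple crossed product. So once some corner torus $A_{\Theta_k}$ is degenerate, the induction breaks twice. The step producing $A_{\Theta_k}$ cannot use the tracial Rokhlin property, and the next step starts from a non-simple algebra.

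This really happens. Take $\Theta=\alpha J_4$, with two diagonal blocks $\begin{pmatrix}0&\alpha\\-\alpha&0\end{pmatrix}$ and $\alpha\notin\Q$. It is nondegenerate, and $A_\Theta\cong A_\alpha\otimes A_\alpha$. In your chain the automorphism adjoining $u_3$ is the identity, and $A_{\Theta_3}\cong A_\alpha\otimes C(\TT)$ is neither simple nor of real rank zero. Outerness of the next automorphism in a tracial GNS representation cannot repair this.

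\textbf{A better normal form does not avoid it.} Suppose $\alpha$ is a quadratic irrational. Take any $g\in\SO(4,4|\Z)$ with $g\cdot\Theta$ defined and any rank-$3$ direct summand of $\Z^4$. The compressed $3\times 3$ matrix has entries in $\Q+\Q\alpha$, so it has the form $M_0+\alpha M_1$ with $M_0,M_1$ rational skew-symmetric. Then a suitable integer multiple of a nonzero integer vector in $\ker M_1$ witnesses degeneracy. Comparing $\Q$-ranks of trace ranges shows more: $A_\Theta$ is not Morita equivalent to any crossed product of a simple $3$-dimensional noncommutative torus by $\Z$.

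\textbf{What is missing.} You need a way to carry the induction through non-simple intermediate algebras. One possibility is a tracial-rank-zero permanence result for $B\rtimes_\beta\Z$, where $B$ is a degenerate noncommutative torus (a $C(\TT^k)$-algebra over the dual of its symmetrizer) and $\beta$ acts minimally on the center of $B$. Nothing you cite supplies this. Separately, even at simple stages, ``outer with enough aperiodicity'' must be replaced by a precise criterion for the tracial Rokhlin property, for instance outerness of every nonzero power in the tracial GNS factor.
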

Our goal is to find an upper bound on the nuclear dimension of the $C^*$-algebras $A_{\Theta}$. In order to motivate our method, we start with the following example.
\begin{example}\label{nc torus block digonal}
 Let $$\Theta=\begin{pmatrix}
     0_{k\times k}&0_{k\times \ell}\\ 0_{\ell \times k}& D\\
 \end{pmatrix}$$ where $D\in \mathcal{T}_{\ell}$ is non-degenerate. \par
We have
 $$A_{\Theta}\cong C(\TT^k)\otimes A_{D}$$
 Moreover, by Theorem \ref{when nc torus is simple?}, $A_D$ is a simple, $A\TT$ algebra.
 In particular, if $\ell=2$ and $D$ is the matrix $\Theta_{\theta}$ from Example \ref{rotation algebras skew symmetric matrix} for $\theta \notin \Q$, then
 $$A_{\Theta}\cong C(\TT^k)\otimes A_{\theta}.$$
\end{example}
So, it is natural to search for an upper bound on the nuclear dimension of $C^*$-algebras of the form $C(\TT^k)\otimes A$, where $A$ is a simple, $A\TT$-algebra. It turns out that such an upper bound follows from a result of Tikuisis and Winter (\cite{nuclear_dimension_commutative_tensor_Z}), combined with properties of the nuclear dimension. Therefore, we expect that this is already known to experts. However, we provide a proof for the sake of completion. We also would like to thank Rufus Willett for pointing out the upper bound in \cite{nuclear_dimension_commutative_tensor_Z} to us.
\begin{theorem}\label{upper bound for dim_nuc of commutative tensor classifiable}
  Let $A$ be a simple, separable, unital, infinite-dimensional, $A\TT$-algebra. Then $$\dimnuc(C(\TT^k)\otimes A)\leq \min\{5, k+1\}\,.$$
\end{theorem}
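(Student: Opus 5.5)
The bound is the minimum of two estimates, and I would prove them separately. The bound $k+1$ should come from Winter--Zacharias, and the bound $5$ from Tikuisis--Winter. Throughout, write $B:=C(\TT^k)\otimes A$.

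For the bound $k+1$, I would use the tensor product estimate of \cite{WZ10}:
$$\dimnuc(D_1\otimes D_2)+1\le(\dimnuc D_1+1)(\dimnuc D_2+1).$$
Taken literally this only gives
$$\dimnuc(B)+1\le (k+1)(\dimnuc A+1),$$
which is too weak. So instead I would exploit the A$\TT$ structure. Since $A=\varinjlim A_j$ with each $A_j$ a finite direct sum of algebras $M_{n}(C(\TT))$ (or of quotients of such algebras), we get
$$B\cong\varinjlim C(\TT^k)\otimes A_j,$$
and each $C(\TT^k)\otimes A_j$ is a finite direct sum of algebras of the form $M_n(C(\TT^{k}\times X))$ with $\dim X\le 1$. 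Hence
$$\dimnuc\bigl(C(\TT^k)\otimes A_j\bigr)\le k+1$$
by $\dimnuc C(Y)=\dim Y$ \cite{WZ10}, together with invariance of nuclear dimension under matrix amplification and finite direct sums. Finally, nuclear dimension does not increase under inductive limits, which gives $\dimnuc(B)\le k+1$.

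For the bound $5$, I would invoke \cite{nuclear_dimension_commutative_tensor_Z}. Tikuisis and Winter prove that
$$\dimnuc\bigl(C(X)\otimes\ZZ\bigr)\le 2$$
for compact metrizable $X$. Their argument also controls $C(X)\otimes \mathcal{Z}\otimes D$ for suitable classifiable $D$; more concretely, I would use the bound $\dimnuc(C(X)\otimes\ZZ)\le 2$ combined with $\ZZ$-stability of $A$. Since $A$ is simple, unital, infinite-dimensional and A$\TT$, it is $\ZZ$-stable, so
$$B\cong C(\TT^k)\otimes\ZZ\otimes A.$$
Applying the tensor product inequality to the factors $C(\TT^k)\otimes\ZZ$ and $A$ gives
$$\dimnuc(B)+1\le 3\,(\dimnuc A+1)=3\cdot 2=6,$$
using $\dimnuc A\le 1$, which holds because $A$ is A$\TT$. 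This yields $\dimnuc(B)\le 5$, as required.

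The main obstacle is checking the hypotheses of the two cited inputs. In the first part, one must state carefully the form of the A$\TT$ building blocks and confirm that inductive limits do not increase nuclear dimension; that is \cite[Prop. 2.3]{WZ10}. In the second part, one must justify the $\ZZ$-stability of $A$ (simple A$\TT$ algebras with finite nuclear dimension are $\ZZ$-stable, for instance by Winter's theorem), and confirm the exact form of the Tikuisis--Winter bound for $C(X)\otimes\ZZ$ with $X=\TT^k$.
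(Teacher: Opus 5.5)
Your proof is correct and follows the paper's argument. The bound $5$ comes from $\ZZ$-stability of $A$, the Tikuisis--Winter estimate for $C(\TT^k)\otimes\ZZ$, and the Winter--Zacharias tensor product inequality. The bound $k+1$ comes from writing $C(\TT^k)\otimes A$ as an inductive limit of finite direct sums of matrix algebras over spaces of dimension at most $k+1$.

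The only variation is in how you get the two inputs for the bound $5$. You obtain $\dimnuc A\le 1$ directly from the A$\TT$ inductive-limit structure, and then $\ZZ$-stability from Winter's finite-nuclear-dimension theorem. The paper instead takes $\ZZ$-stability from a result on simple ASH algebras and then deduces $\dimnuc A=1$ from the general theorem for simple $\ZZ$-stable algebras.
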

\begin{proof}
    We first prove that $\dimnuc(C(\TT^k)\otimes A)\leq 5$. \par
    By \cite[Corollary 3.1]{z-stability_ASH_simple}, $A$ is $\ZZ$-stable. Hence, it has nuclear dimension 1 by \cite[Theorem A]{dim_nuc_simple} and $$C(\TT^r)\otimes A\cong (C(\TT^k)\otimes \ZZ)\otimes A\,.$$
    But $\dim_{nuc}(C(\TT^k)\otimes \ZZ)\leq 2$ by \cite[Theorem 4.1]{nuclear_dimension_commutative_tensor_Z}.\footnote{The result on \cite{nuclear_dimension_commutative_tensor_Z} is stated with respect to the decomposition rank. However, we the nuclear dimension is always less or equal to the decomposition rank. Hence, the upper bound is also true for the nuclear dimension.} Moreover, \cite[Proposition 2.3(ii)]{WZ10} implies that
    $$\dimnuc(C(\TT^k)\otimes A)\leq (2+1)\cdot (1+1)-1=5\,.$$
    We now show that $\dimnuc(C(\TT^k)\otimes A)\leq k+1$. \par
    Because $A$ is an $A\TT$-algebra, $A=\varinjlim A_n$, where $$A_n=\bigoplus_{i,n=1}^{j_n} M_{d_{i,n}}(C(\TT))\,.$$
    It follows that $C(\TT^k)\otimes A=\varinjlim B_n$, where $$B_n=\bigoplus_{i,n=1}^{j_n} M_{d_{i,n}}(C(\TT))\otimes C(\TT^k)\cong \bigoplus_{i,n=1}^{j_n} M_{d_{i,n}}(C(\TT^{k+1}))\,.$$
    Notice that $\dimnuc(A_n)=k+1$ for every $n\in \N$. Hence, \cite[Proposition 2.3(iii)]{WZ10} implies
    $$\dimnuc(A)\leq \liminf \dimnuc(A_n)=k+1\,.$$
    Combining our two upper bounds, we deduce that
    $$\dimnuc(C(\TT^k)\otimes A)\leq \min\{5, k+1\}\,.$$
    
\end{proof}
We also need the following Remark.
\begin{remark}\label{dim_nuc_morita_equivalent}
    Strongly Morita equivalent separable $C^*$-algebras have the same nuclear dimension. Indeed, let $A$ and $B$ be strongly Morita equivalent separable $C^*$-algebras. Then $A\otimes \mathcal{K}\cong B\otimes \mathcal{K}$ (\cite{morita_equivalence_stable_isom}). Hence
    $$\dimnuc(A)=\dimnuc(A\otimes \mathcal{K})=\dimnuc(B\otimes \mathcal{K})=\dimnuc(B)\,.$$
    The first and third equality follow from \cite[Corollary 2.8(i)]{WZ10}.
\end{remark}
Before we can prove our main result of the section, we need to give some background information. First, note that $\SO(n,n|\Z)$ is a certain subset of $\SO(n,\Z)$. For the exact definition we refer the reader to \cite[p.8]{elliott2006moritaequivalencesmoothnoncommutative}. Moreover, there exists a densely defined action $\SO(n,n|\Z)\curvearrowright \mathcal{T}_n$. This action is described on \cite[p.8]{elliott2006moritaequivalencesmoothnoncommutative}, and it was first defined in \cite{action_of_skew_symmetric_matrices}. We also need the following results from the literature.
\begin{proposition}[Proposition 3.3, \cite{elliott2006moritaequivalencesmoothnoncommutative}]\label{equivalent to nice matrix}
Let $\Theta\in \mathcal{T}_r$. Then there exists $g\in \SO(r,r|\Z)$ such that $$g\cdot \Theta=\begin{pmatrix}
            0&0\\0&D
        \end{pmatrix},$$
        where $D\in \mathcal{T}_d$ for $1\leq d \leq r$ and is non-degenerate.
    
\end{proposition}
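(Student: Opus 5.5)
The plan is to reduce $\Theta$ to the block form in stages, using three families of elements of $\SO(r,r|\Z)$ whose action on $\mathcal{T}_r$ is explicit (see \cite[p.8]{elliott2006moritaequivalencesmoothnoncommutative}). These are: the \emph{basis changes} $\Theta\mapsto A\Theta A^{T}$ with $A\in GL(r,\Z)$; the \emph{integer translations} $\Theta\mapsto\Theta+S$ with $S$ an integral skew-symmetric matrix; and the \emph{partial inversions} $\Theta\mapsto(A\Theta+B)(C\Theta+D)^{-1}$ for block matrices $\begin{pmatrix}A&B\\C&D\end{pmatrix}$ in $\SO(r,r|\Z)$, wherever this is defined. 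Since the action is only densely defined, I will check at each stage that the relevant denominator $C\Theta+D$ is invertible.

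\textbf{Step 1: isolate the degenerate directions.} Let
$$\Gamma=\{x\in\Z^r:\exp(2\pi i\langle x,\Theta y\rangle)=1 \ \forall y\in\Z^r\}=\{x\in\Z^r:\Theta^{T}x\in\Z^r\},$$
and let $\Gamma'=\{x\in \Z^r: kx\in\Gamma \text{ for some } k\ge1\}$ be its saturation. Since $\Gamma'$ is a pure subgroup, say of rank $k$, there is $A\in GL(r,\Z)$ carrying $\Gamma'$ onto $\Z^k\oplus 0$. After replacing $\Theta$ by $A\Theta A^T$, the first $k$ columns of $\Theta$ (and, by skew-symmetry, its first $k$ rows) are rational. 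Hence
$$\Theta=\begin{pmatrix}\Theta_{11}&\Theta_{12}\\ -\Theta_{12}^T&\Theta_{22}\end{pmatrix}$$
with $\Theta_{11}$ and $\Theta_{12}$ rational.

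\textbf{Step 2: kill the rational part.} This is the main obstacle. I want to show that the orbit of $0\in\mathcal{T}_k$ under $\SO(k,k|\Z)$ contains every rational skew-symmetric matrix. I would prove this by a Euclidean-algorithm induction on the denominators, mirroring the fact that rational noncommutative tori are Morita equivalent to commutative ones.
\begin{enumerate}
\item First bring $\Theta_{11}$ to a normal form $\bigoplus_j\begin{pmatrix}0&p_j/q_j\\-p_j/q_j&0\end{pmatrix}\oplus 0$ by a symplectic-type basis change in $GL(k,\Z)$.
\item In each $2\times2$ block, alternate integer translations with the inversion $\theta\mapsto-1/\theta$, which is available inside the $SL(2,\Z)$-type subgroup of $\SO(2,2|\Z)$. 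This runs the continued fraction of $p_j/q_j$ down to $0$.
\end{enumerate}
Embedding these moves block-diagonally in $\SO(r,r|\Z)$, they act on all of $\Theta$ and transform $\Theta_{12}$ and $\Theta_{22}$ by fractional-linear formulas. I must check that $\Theta_{12}$ stays rational during this process, which should follow from the explicit formulas since all coefficients are rational. Once $\Theta_{11}=0$, clear $\Theta_{12}$ using the unipotent elements $\begin{pmatrix}1&B\\0&1\end{pmatrix}$ together with basis changes that shear $\Z^k$ into the complement; a rational $\Theta_{12}$ with denominators dividing some $q$ may first require rescaling the degenerate lattice via further partial inversions. If this bookkeeping becomes unwieldy, I would instead induct on $r$: split off one degenerate direction at a time and reduce each time to a $2\times2$ or $1$-dimensional computation.

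\textbf{Step 3: nondegeneracy of the remaining block.} The result of Steps 1--2 is a matrix $\begin{pmatrix}0&0\\0&D\end{pmatrix}$ with $D\in\mathcal{T}_d$, $d=r-k$. Suppose $D$ were degenerate, witnessed by some nonzero $x\in\Z^d$. I would transport $x$ back along the group element to obtain a degeneracy vector of the original $\Theta$ lying outside $\Gamma'$. This requires verifying that the action of $\SO(r,r|\Z)$ preserves the rank of the saturated degeneracy lattice; I expect to obtain this from the fact that $\SO(r,r|\Z)$-related tori are Morita equivalent, so that their centres, and hence their degeneracy ranks, agree. This gives a contradiction with the maximality of $\Gamma'$, so $D$ is non-degenerate.

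The case $d=0$ cannot be excluded by this argument alone (it corresponds to rational $\Theta$, where $\Theta$ is carried to $0$). I would treat it as allowed, or absorb it into the statement's convention, since the application in this section concerns only irrational $\Theta$.
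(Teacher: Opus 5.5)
The paper gives no proof of this statement (it is quoted from Elliott--Li), so your outline has to stand on its own. Steps 1 and 2(a)--(b) are fine. Under the block-diagonally embedded moves the first $k$ rows stay rational and $\Theta_{22}$ only changes by a rational skew matrix. Inversions on disjoint $2\times2$ blocks of $\Theta_{11}$ do not interfere.

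\textbf{The gap is the last part of Step 2.} Once $\Theta=\begin{pmatrix}0&P\\-P^T&Q\end{pmatrix}$ with $P$ rational, none of the moves you actually specify removes the fractional part of $P$:
\begin{itemize}
\item integer translations change $P$ only by integer matrices;
\item the shear $\begin{pmatrix}I&0\\M&I\end{pmatrix}$ (acting by $\Theta\mapsto R\Theta R^T$) fixes $P$ and only replaces $Q$ by $Q+MP-P^TM^T$;
\item the shear $\begin{pmatrix}I&M\\0&I\end{pmatrix}$, the one that moves $\Z^k$ into the complement, introduces $MQM^T$ into the top-left block.
\end{itemize}
``Further partial inversions'' or ``induct on $r$'' is exactly what has to be supplied. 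This is where degenerate coordinates are coupled to non-degenerate ones. For example, take $r=3$, $P=(\tfrac12\ \ 0)$, $Q=\begin{pmatrix}0&\theta\\-\theta&0\end{pmatrix}$ with $\theta\notin\Q$. The reduction goes through an inversion on the block $\{1,2\}$, which mixes a degenerate with a non-degenerate coordinate, and produces $D=\begin{pmatrix}0&2\theta\\-2\theta&0\end{pmatrix}$.

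\textbf{One way to close it.} Conjugate by $\diag(R,S)$ with $R\in GL(k,\Z)$, $S\in GL(d,\Z)$ so that $qP$ is in Smith normal form. Then $P$ is diagonal with entries $t_i$; reduce the $t_i$ mod $1$ by translations. For each $t_i\neq0$, invert the block $\{i,k+i\}$.
\begin{itemize}
\item Because row $i$ is supported only in column $k+i$, the Schur-complement correction $X^TT^{-1}X$ vanishes.
\item The coupling becomes $-1/t_i$, and row $k+i$ becomes supported only in column $i$.
\item Off the block, row $i$ becomes the old row $k+i$ divided by $t_i$.
\end{itemize}
So the two coordinates swap roles, and alternating with translations runs the Euclidean algorithm on $t_i$ until the coupling is $0$. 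Different pairs do not interfere.

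The resulting $D$ arises from the $\Theta_{22}$ of Step 1 by rational congruences and rational skew translations. This makes Step 3 elementary; your Morita-invariance-of-centres argument is correct, but it imports Li's theorem and Dauns--Hofmann.
\begin{itemize}
\item After Step 1, $\Theta_{22}$ is already non-degenerate. Indeed, $\Theta_{22}x\in\Z^d$ forces $\Theta(0,x)\in\Q^r$, hence $(0,x)\in\Gamma'$ and $x=0$.
\item Non-degeneracy of $\Theta_{22}$ is equivalent to $\{x\in\Q^d:\Theta_{22}x\in\Q^d\}=0$, and this condition is invariant under those operations.
\end{itemize}

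Finally, your remark on $d=0$ is right and can be sharpened. Since $g$ is integral, $g\cdot\Theta$ is rational whenever $\Theta$ is, and a rational $D$ of size $d\ge1$ is always degenerate. So for rational $\Theta$ (for instance every $\Theta\in\mathcal{T}_1$) the statement as printed, with $1\le d$, fails, and the empty block must be allowed.
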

\begin{proposition}[Theorem 1.1, \cite{Morita_equivalence_nc_tori}]\label{criterion_for_morita_equivalence}
Let $\Theta\in \mathcal{T}_r$ and $g\in \SO(r,r|\Z)$. If $g\cdot \Theta$ is defined, then $A_{\Theta}$ and $A_{g\cdot \Theta}$ are strongly Morita equivalent. 
\end{proposition}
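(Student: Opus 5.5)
The statement is Li's theorem (the Rieffel--Schwarz conjecture). I would not reprove it from scratch; I would reduce it to a small set of generators of $\SO(r,r|\Z)$ for which the Morita equivalence can be built by hand. Write $g=\begin{pmatrix}A&B\\ C&D\end{pmatrix}$, so that $g\cdot\Theta=(A\Theta+B)(C\Theta+D)^{-1}$ wherever $C\Theta+D$ is invertible. The group $\SO(r,r|\Z)$ is generated by three families of elements.
\begin{enumerate}
\item The block-diagonal elements $\begin{pmatrix}R&0\\0&(R^t)^{-1}\end{pmatrix}$ with $R\in GL_r(\Z)$ of appropriate determinant. These act by $\Theta\mapsto R\Theta R^t$.
\item The unipotent elements $\begin{pmatrix}I&N\\0&I\end{pmatrix}$ with $N$ an integer skew-symmetric matrix. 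These act by $\Theta\mapsto\Theta+N$.
\item Partial flips $\sigma_{2k}$, which exchange the first $2k$ coordinates of $\Z^r$ with the corresponding dual coordinates.
\end{enumerate}

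For the first two families the statement gives an isomorphism, not merely an equivalence. In case (1), $R$ is a group automorphism of $\Z^r$, and $\sigma_{R\Theta R^t}$ is the pullback of $\sigma_\Theta$ along it. In case (2), $\sigma_{\Theta+N}=\sigma_\Theta\cdot\sigma_N$, and one checks that $\sigma_N$ is a coboundary (or adjusts by a symmetric bicharacter), so Remark \ref{all twisted group c* algebras are nc tori} and the cohomology invariance of $C^*(G,\sigma)$ apply.

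The real content is the flip $\sigma_{2k}$, which is defined exactly when the upper-left $2k\times 2k$ block $\Theta_{11}$ of $\Theta$ is invertible. For this I would use Rieffel's Heisenberg-module construction. Take $M=\R^{k}\times\Z^{r-2k}$ (adding a finite group if necessary) and embed $\Z^r$ as a lattice $\Lambda$ in $M\times\widehat{M}$ using $\Theta$. The Heisenberg cocycle restricted to $\Lambda$ gives $\sigma_\Theta$. On $C_c(M)$ one defines a left $A_\Theta$-action by time-frequency shifts and a right action of the twisted algebra of the orthogonal lattice $\Lambda^\perp$. Rieffel's theorem then says that the completion of $C_c(M)$ is an $A_\Theta$--$C^*(\Lambda^\perp,\bar\sigma)$ imprimitivity bimodule. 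The remaining work is the linear algebra showing that $\Lambda^\perp$, with its cocycle, is isomorphic to $\Z^r$ with $\sigma_{\sigma_{2k}\cdot\Theta}$. This amounts to inverting $\Theta_{11}$, and it is exactly where the hypothesis that $g\cdot\Theta$ is defined is used.

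The main obstacle is the last step: passing from generators to an arbitrary $g$ for which only $g\cdot\Theta$ itself is assumed defined. A factorization $g=g_1\cdots g_t$ into generators may pass through intermediate matrices $g_j\cdots g_t\cdot\Theta$ that are undefined. I would first try to prove the following: for any $g$ and any $\Theta$ with $g\cdot\Theta$ defined, there is a factorization in which every partial product is defined at $\Theta$. The approach would be to pre-multiply by block-diagonal elements of type (1), chosen so that the relevant leading minors of $C\Theta+D$ are nonzero. Since the determinant condition is polynomial and the set where it fails is a proper algebraic subset, a suitable integer $R$ exists. With that factorization in hand, strong Morita equivalence is transitive, so composing the isomorphisms from (1) and (2) with the bimodules from (3) gives the proposition.
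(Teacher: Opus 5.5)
\paragraph{The gap.} The gap is the step you yourself single out as the main obstacle, and it is not a technicality: it is the entire content of the statement. Handling the three families of generators separately (isomorphisms for $\rho(R)$ and $\mu(N)$, a Heisenberg bimodule for a flip wherever it is defined) is essentially the Rieffel--Schwarz argument. That argument gives the conclusion for $\Theta$ in a dense set, where no intermediate step of a factorization can become undefined, but not in general. The degenerate and rational $\Theta$ are exactly what Li's Theorem~1.1 adds, and the paper gives no proof of its own here; it only quotes Li. Your proposal names the lemma that would be needed: an \emph{admissible} factorization of $g$, i.e.\ one all of whose partial products are defined at $\Theta$. But it does not prove it.

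\paragraph{Why the sketched mechanism cannot work.} Multiplying $g$ on either side by $\rho(R)$ gives an equivalent problem, because $\rho(R)$ is everywhere defined and invertible. Concretely, $\rho(R)g$ admits an admissible factorization at $\Theta$ iff $g$ does, iff $g\rho(R)^{-1}$ does at $R\Theta R^t$. So making the leading minors of $(R^t)^{-1}(C\Theta+D)$ nonzero changes nothing by itself. It could at most be one step inside an explicit factorization algorithm, and you do not give one.

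Such an algorithm has real work to do, for two reasons.
\begin{itemize}
\item There is no integral Bruhat decomposition. Let $P$ be the subgroup of elements with $C=0$. The elementary divisors of the $C$-block are invariant under multiplication by $P$ on both sides, and they are all $1$ for every $\sigma_{2k}$. Already for $r=2$ there are elements whose $C$-block is $3$ times a unimodular matrix (they act on $\theta$ by $\theta\mapsto(2\theta+1)/(3\theta+2)$). So a general $g$ lies in no $P\sigma_{2k}P$ and needs several flips, each imposing an invertibility condition on an intermediate matrix.
\item Degenerate $\Theta$ defeat any genericity argument in $R$. The flip $\sigma_{2k}$ is defined at $\Theta'$ iff the upper-left $2k\times 2k$ block of $\Theta'$ is invertible, and $\operatorname{rank}(R\Theta R^t)=\operatorname{rank}\Theta$, so the polynomial $R\mapsto\det\big((R\Theta R^t)_{11}\big)$ can vanish identically. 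For example, at $\Theta=0$ the value $g\cdot 0=BD^{-1}$ is defined whenever $D$ is invertible, yet no flip is ever defined at $R\cdot 0\cdot R^t=0$. Hence every admissible factorization of such a $g\notin P$ must first leave $0$ by a $\mu(N)$-move, which your argument never produces. For $r=2$, $\rho(R)$ acts on $\theta$ only through the sign $\det R$, so it cannot affect definedness at all.
\end{itemize}

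\paragraph{How the gap could be closed.} One option is a genuine arithmetic induction, say on the elementary divisors of $C$, that interleaves $\mu(N)$-moves with flips and shows admissible factorizations exist. The other is to avoid factoring altogether:
\begin{itemize}
\item Normalize $g$ modulo $P$ on both sides; this costs only isomorphisms and does not affect definedness.
\item Build a single Heisenberg bimodule for $g$ itself on $M=\R^p\times\Z^q\times F$, with the finite group $F$ absorbing the non-unimodular part of $C$. For $r=2$ this is Rieffel's classical module on $\R$ times a finite cyclic group.
\end{itemize}
This second route is where the finite group you mention is actually needed.
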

Now we are ready to prove our main result.
\begin{theorem}\label{irrational case on Z^r}
    Let $\Theta$ be a skew symmetric real $r\times r$ matrix. Assume that the cohomology class $[\sigma_{\Theta}]$  of the associated cocycle is irrational. Then $$\dimnuc(A_{\Theta})\leq \min\{5,r-1\}\,.$$
    In particular, $\dimnuc(A_{\Theta})<r$.
    \begin{proof}
        By Proposition \ref{equivalent to nice matrix}, there exists a non-degenerate skew symmetric, real $d\times d$ matrix $D$, where $1\leq d\leq r$, and $g\in \SO(r,r|\Z)$ such that
        $g\cdot \Theta=\begin{pmatrix}
            0&0\\0&D
        \end{pmatrix}$
        where the multiplication is the action $\SO(r,r|\Z)\curvearrowright \mathcal{T}_r$. \par
        By assumption, $[\sigma_{\Theta}]$ is irrational. This implies that $d\geq 2$.
        By Proposition \ref{criterion_for_morita_equivalence}, $A_{\Theta}$ and $A_{g\cdot \Theta}$ are strongly Morita equivalent. Hence, Example \ref{nc torus block digonal}, implies that
        $A_{\Theta}$ is strongly Morita equivalent to $C(\TT^{r-d})\otimes A_D$. Moreover, Theorem \ref{when nc torus is simple?} implies that $A_D$ is a simple, $A\TT$-algebra. Hence
        $$\dimnuc(A_{\Theta})=\dimnuc(C(\TT^{r-d})\otimes D)\leq \min\{5,r-d+1\}\leq \min\{5,r-1\}\,.$$
        Notice that the first equality is due to Remark \ref{dim_nuc_morita_equivalent}, the first inequality follows from Theorem \ref{upper bound for dim_nuc of commutative tensor classifiable}, while the second holds because $d\geq 2$. Proof is complete.
    \end{proof}
\end{theorem}
\begin{corollary}\label{equivalene of rational and dimnuc=rank}
Let $\sigma$ be a 2-cocycle on $\Z^r$. The following are equivalent.
\begin{enumerate}[label=\roman*.]
    \item $\dimnuc(C^*(\Z^r,\sigma))=r$
    \item $[\sigma]$ is rational.
\end{enumerate}
\begin{proof}
 It follows from Remark \ref{all twisted group c* algebras are nc tori}, Theorem \ref{main theorem} and Theorem \ref{irrational case on Z^r}.  
\end{proof}
\end{corollary}

\bibliographystyle{abbrv}
\small

\bibliography{main}

\end{document}